\numberwithin{equation}{section}
\colorlet{refkey}{orange!20}
\colorlet{labelkey}{blue!30}
\definecolor{refkey}{gray}{.75}
\definecolor{labelkey}{gray}{.5}
\newtheorem{theorem}{Theorem}[section]
\newtheorem{proposition}[theorem]{Proposition}
\newtheorem{lemma}[theorem]{Lemma}
\newtheorem{claim}[theorem]{Claim}
\newtheorem{corollary}[theorem]{Corollary}
\newtheorem*{question*}{Question}
\newtheorem{remark}[theorem]{Remark}
\theoremstyle{definition}
\newtheorem*{definition*}{Definition}
\theoremstyle{remark}
\newtheorem*{remark*}{Remark}
\newcommand{\abs}[1]{\left\lvert#1\right\rvert}
\newcommand{\paren}[1]{\left( #1 \right)}
\renewcommand{\epsilon}{\varepsilon}
\newcommand{\x}{\times}
\newcommand{\e}{\epsilon}
\renewcommand{\P}{\mathbb{P}}
\newcommand{\sG}{\mathscr{G}}
\newcommand{\EE}{\mathbb{E}}
\newcommand{\RR}{\mathbb{R}}
\newcommand{\cW}{\mathcal{W}}
\newcommand{\cG}{\mathcal{G}}
\newcommand{\cP}{\mathcal{P}}
\newcommand{\Bin}{\operatorname{Bin}}
\author{Eyal Lubetzky}
\address{E.\ Lubetzky\hfill\break
Courant Institute of Mathematical Sciences,
New York University, New York, NY 10012, USA}
\email{eyal@courant.nyu.edu}
\author{Yufei Zhao}
\address{Y.\ Zhao\hfill\break
Mathematical Institute, University of Oxford,
Oxford OX2 6GG, United Kingdom}
\email{yufei.zhao@maths.ox.ac.uk}
\thanks{Y.\ Zhao was supported by a Microsoft Research Ph.D.\ Fellowship.}
\title[On upper tails in sparse random graphs]{On the variational problem for \\ upper tails in sparse random graphs}
\begin{document}

\begin{abstract}\vspace{-0.175cm}

What is the probability that the number of triangles in $\cG_{n,p}$, the Erd\H{o}s-R\'enyi random graph with edge density $p$,
is at least twice its mean? Writing it as $\exp[- r(n,p)]$, already the order of the rate function $r(n,p)$ was a longstanding open problem when $p=o(1)$, finally settled in 2012 by
Chatterjee and by DeMarco and Kahn, who independently showed that $r(n,p)\asymp n^2p^2 \log (1/p)$
for $p \gtrsim \frac{\log n}n$; the exact asymptotics of $r(n,p)$ remained unknown.

The following variational problem can be related to this large deviation question at $p\gtrsim \frac{\log n}n$: for $\delta>0$ fixed, what is the minimum  asymptotic $p$-relative entropy of a weighted graph on $n$ vertices with triangle density at least $(1+\delta)p^3$?
A beautiful large deviation framework of Chatterjee and Varadhan (2011) reduces upper tails for triangles to a limiting version of this problem for \emph{fixed} $p$. A very recent breakthrough of Chatterjee and Dembo extended its validity to $n^{-\alpha}\ll p \ll 1$ for an explicit $\alpha>0$, and plausibly it holds in all of the above sparse regime.

In this note we show that the solution to the variational problem is $\min\{\frac12 \delta^{2/3}\,,\, \frac13 \delta\}$ when $n^{-1/2}\ll p \ll 1$ vs.\ $\frac12 \delta^{2/3}$ when $n^{-1} \ll p\ll n^{-1/2}$ (the transition between these regimes is expressed in the count of triangles minus an edge in the minimizer).
From the results of Chatterjee and Dembo, this shows for instance that the probability that $\cG_{n,p}$ for $ n^{-\alpha} \leq p \ll 1$ has twice as many triangles as its expectation is
$\exp[-r(n,p)]$ where $r(n,p)\sim \frac13 n^2 p^2\log(1/p)$.
Our results further extend to $k$-cliques for any fixed $k$, as well as give the order of the upper tail rate function for an arbitrary fixed subgraph when $p\geq n^{-\alpha}$.
\end{abstract}

{\mbox{}
\vspace{-1.35cm}
\maketitle
}
\vspace{-0.65cm}
\section{Introduction}\label{sec:intro}

The following question regarding upper tails for triangle counts in $\cG_{n,p}$, the Erd\H{o}s-R\'enyi random graph with edge density $p$,
has been extensively studied, being a representing example of large deviations for subgraph counts in random graphs (see, e.g.,~\cite{JR02,Vu01,KV04,JOR04,JR04,Cha12,DK12b,DK12} as well as~\cite{Bol,JLR} and the references therein):
\begin{question*}
What is the probability that the number of triangles in $\cG_{n,p}$ is at least twice its mean, or more generally, larger by a factor of $1+\delta$ for $\delta>0$ fixed?
\end{question*}
In the dense case ($p$ fixed), the limiting asymptotics  of the rate function --- the normalized logarithm of this probability, here denoted by $r(n,p,\delta)$ ---
was reduced to an analytic variational problem on symmetric functions $f:[0,1]^2\to[0,1]$ (for a large class of large deviation questions) by Chatterjee and Varadhan~\cite{CV11}.
However, for $p=o(1)$, obtaining the order of $r(n,p,\delta)$ was already a longstanding open problem. That $n^2 p^2 \lesssim r(n,p,\delta)\lesssim n^2p^2\log(1/p)$ followed from the works of Vu~\cite{Vu01} and Kim and Vu~\cite{KV04} (see also~\cite{JOR04}), and this question was finally settled in 2012 by
Chatterjee~\cite{Cha12} and by DeMarco and Kahn~\cite{DK12}, where it was independently shown that $r(n,p,\delta)\asymp n^2p^2 \log (1/p)$
for $p \gtrsim \frac{\log n}n$
(see~\cite{Cha12,DK12} for an account of the rich related literature).
The exact asymptotics of this rate function was not known for any $\frac{\log n}n \lesssim p \ll 1$.

Note that for the dense regime of fixed $p$, while~\cite{CV11} provided a closed form for the rate function in terms of the above variational problem,
its solution is only known in a subset of the range of parameters $(p,\delta)$ known as the \emph{replica symmetric} phase (where the excess in the number of triangles is explained by encountering too many edges that are essentially uniformly distributed), and little is known on its complement (the \emph{symmetry breaking} phase; see our previous work~\cite{LZ} where this phase diagram was determined).

The variational problem in~\cite{CV11} can be viewed, via Szemer\'edi's regularity lemma~\cite{Sze78} and the theory of
graph limits by Lov\'asz et al.~\cite{LS06,LS07,BCLSV08}, as the limit of the following problem.

\begin{definition*}
  [Discrete variational problem for upper tails of triangles]
Let $\sG_n$ denote the set of weighted undirected graphs on $n$ vertices with
edge weights in $[0,1]$, i.e.,
\[ \sG_n = \Big\{ G = (g_{ij})_{1 \leq i < j \leq n} \;:\; 0 \leq g_{ij} \leq
1 ~,~g_{ij} = g_{ji}~,~g_{ii} = 0 ~\mbox{ for all $i,j$}\Big\}\,.\]
The variational problem for $\delta>0$ and $0<p<1$ is given by
\begin{equation} \label{eq:var}
\phi(n,p,\delta) := \inf \Big\{ I_p(G) : G \in \sG_n \text{ with }
t(G) \geq (1 + \delta)p^3\Big\}\,,
\end{equation}
where
\[
t(G) := n^{-3} \sum_{1 \leq i,j,k \leq n} g_{ij} g_{jk} g_{ik}
\]
is the density of (labeled) triangles in $G$, and $I_p(G)$ is its entropy relative to $p$, i.e.,
\[
I_p(G) := \sum_{1 \leq i < j \leq n} I_p(g_{ij})\quad\mbox{ with }\quad I_p(x) := x \log\frac{x}{p} + (1-x) \log\frac{1-x}{1-p}\,.
\]
\end{definition*}
Indeed, it follows from the powerful large deviation framework of~\cite{CV11} that for $p$ fixed (the dense regime)
$\frac{1}{n^2} \log \P\left(t(\cG_{n,p})\geq (1+\delta)p^3\right)$ tends as $n\to\infty$ to the limit of $-\phi(n,p,\delta)/n^2$.

However, in the sparse regime of $p=o(1)$, which lacks the rich set of tools that are based on Szemer\'edi's regularity lemma for dense graphs, there were no counterparts to this result until a very recent breakthrough by Chatterjee and Dembo~\cite{CD}. There it was shown that the discrete variational problem~\eqref{eq:var} does govern the rate function of subgraph counts as long as $p \geq n^{-\alpha}$ for a suitable constant $\alpha$. In particular, for triangle counts (see~\cite[Theorem~1.2]{CD} and the remark following it, yielding a slightly wider range than the one stated next) one has that
\begin{equation}
  \label{eq-cd}
   \P\left(t(\cG_{n,p}) \geq (1 +
    \delta)p^3\right) = \exp\left[-(1-o(1))\phi(n,p,\delta)\right]
\end{equation}
 whenever $n^{-1/42}\log n \leq p \ll 1$ (this should extend to smaller $p$, as commented in~\cite{CD}; in fact, it is plausible that this result holds throughout the sparse regime of $\frac{\log n}n \ll p \ll 1$.)

In this note we establish the following for the discrete variational problem~\eqref{eq:var}.
\begin{theorem}
  \label{thm:var}
  Fix $\delta > 0$. If $n^{-1/2}\ll p \ll 1$, then
  \begin{equation} \label{eq:var-ans} \lim_{n \to \infty}
    \frac{\phi(n, p, \delta)}{n^2 p^2 \log(1/p)} =
    \min\bigg\{\frac{\delta^{2/3}}{2}\;,\;\frac{\delta}{3}\bigg\}\,.
  \end{equation}
  On the other hand, if $n^{-1}  \ll p \ll n^{-1/2}$, then
  \begin{equation} \label{eq:var-ans-b} \lim_{n \to \infty}
    \frac{\phi(n, p, \delta)}{n^2 p^2 \log(1/p)} =
    \frac{\delta^{2/3}}{2}\,.
  \end{equation}
\end{theorem}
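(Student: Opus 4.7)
The plan is to establish matching upper and lower bounds on $\phi(n,p,\delta)$, and to handle the two regimes $p\gg n^{-1/2}$ and $p\ll n^{-1/2}$ in parallel.

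\textbf{Upper bound (constructions).} I would exhibit two explicit feasible $G\in\sG_n$ and take the better. The \emph{clique} construction fixes a set of $k=\lceil\delta^{1/3}np\rceil$ vertices and sets $g_{ij}=1$ on all pairs within, $g_{ij}=p$ elsewhere. Expanding $n^3 t(G)$ by how many edges each triple has in the clique, the dominant excess term is $k^3\sim\delta n^3 p^3$, so $t(G)\geq(1+\delta)p^3(1-o(1))$, at entropy cost $\binom{k}{2}I_p(1)=(1+o(1))(\delta^{2/3}/2)\,n^2p^2\log(1/p)$. The \emph{hub} construction (feasible only when $np^2\gg 1$, i.e.\ $p\gg n^{-1/2}$) takes $S\subset[n]$ with $|S|=s:=\lceil\delta np^2/3\rceil$ and sets $g_{ij}=1$ for every edge incident to $S$; the dominant excess now comes from triples with exactly one vertex in $S$ and two outside (each contributing weight $p$), giving $\sim 3sn^2p\sim\delta n^3p^3$ at entropy cost $sn\log(1/p)(1+o(1))=(1+o(1))(\delta/3)\,n^2p^2\log(1/p)$. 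Taking the minimum matches \eqref{eq:var-ans}; for $p\ll n^{-1/2}$ the hub is infeasible (it would require $s<1$), leaving only the clique bound, matching \eqref{eq:var-ans-b}.

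\textbf{Lower bound (reduction to a discrete problem).} For any feasible $G$, I would first discretize by fixing a slowly-growing threshold $\alpha=\alpha(p)\to 1$ (for instance $\alpha=1-1/\log(1/p)$) and letting $H:=\{(i,j):g_{ij}\geq\alpha\}$. A direct computation shows $I_p(\alpha)=(1-o(1))\log(1/p)$ for this choice of $\alpha$, hence $I_p(G)\geq |H|\,I_p(\alpha)=(1-o(1))|H|\log(1/p)$, so it suffices to lower bound $|H|$. Classifying vertex triples by how many of their edges lie in $H$ and verifying that the contribution from ``light'' edges ($g_{ij}<\alpha$) to $t(G)$ is negligible, the constraint becomes
\[ t_3(H)+p\,t_2(H)\geq (1-o(1))\frac{\delta}{6}\,n^3 p^3, \]
where $t_3(H)$ counts triangles in $H$ and $t_2(H)$ counts unordered vertex triples with exactly two edges in $H$.

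\textbf{Lower bound (combinatorial optimization).} Two sharp bounds now apply: Kruskal--Katona gives $t_3(H)\leq(\sqrt 2/3)|H|^{3/2}(1+o(1))$, tight for cliques, while in the sparse regime $|H|\ll n^2$ the Ahlswede--Katona theorem yields $t_2(H)\leq\sum_v\binom{d_v}{2}\leq(n/2)|H|(1+o(1))$, tight for ``quasi-star'' (hub) graphs. Crucially these cannot both be tight on the same $H$: cliques have $t_2=0$ and hubs have $t_3$ of lower order. I would perform a degree-based decomposition of $H$---isolating the edges incident to a small set of ``high-degree'' vertices (the \emph{hub part}, which supplies $pt_2$ efficiently but $t_3$ only as $O(|H_{\mathrm{hub}}|^2/n)$) from the remaining edges (the \emph{clique part}, which has bounded maximum degree, hence $o(|H_{\mathrm{clique}}|\,n)$ cherries, and supplies $t_3$ via Kruskal--Katona). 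The two parts then contribute essentially disjointly, and a two-variable optimization---minimize $|H_{\mathrm{hub}}|+|H_{\mathrm{clique}}|$ subject to $t_3+pt_2\geq\delta n^3 p^3/6$---has its minimum at an extreme point: either all edges in a clique (giving $|H|\geq(\delta^{2/3}/2)\,n^2p^2$) or all in hubs (giving $|H|\geq(\delta/3)\,n^2p^2$). When $p\ll n^{-1/2}$ the hub mechanism is quantitatively unavailable (since $\delta np^2/3<1$), leaving only the clique bound.

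\textbf{Main obstacle.} The hardest step is the final one: applied naively (to the entire graph $H$), the Kruskal--Katona and Ahlswede--Katona bounds combine only to $2\sqrt 2\,\hat e^{3/2}+3\hat e\geq\delta$, where $\hat e:=|H|/(n^2p^2)$, which is strictly weaker than $\min\{\delta^{2/3}/2,\delta/3\}$ (by roughly a factor of two). Making rigorous the dichotomy that an optimum is essentially either a pure clique or a pure hub---never an efficient ``mixture''---requires the careful degree-based structural argument above. A secondary technicality is keeping the rounding estimate in the reduction step tight to $(1+o(1))$ multiplicative factors, which forces the threshold $\alpha$ to be chosen with care.
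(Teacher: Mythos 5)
Your upper-bound constructions coincide with the paper's. The lower bound, however, has a real gap at the thresholding step. Setting $H := \{(i,j) : g_{ij}\geq\alpha\}$ and charging entropy only via $I_p(G)\geq|H|\,I_p(\alpha)$ discards the entropy carried by edges of intermediate weight $p\ll g_{ij}<\alpha$, while your claim that the light edges contribute negligibly to $t(G)$ is unjustified: a feasible $G$ could, a priori, carry most of its excess triangle density on a dense set of weight-$\tfrac12$ edges and have $H$ tiny, so $|H|\,I_p(\alpha)$ would undercount badly. Ruling this out already requires invoking the entropy budget to bound the $L^2$-mass at every intermediate scale, so the argument is circular (or at best multi-scale, and then not a clean reduction to a $\{0,1\}$-graph at all). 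The paper sidesteps this entirely: it works with the continuous excess $U=W-p$ and uses the pointwise bound $I_p(p+x)\geq(1-o(1))x^2 I_p(1)$ for all $0\leq x\leq 1-p$ (Corollary~\ref{cor:I_p-sq-lower-bd-2}), which turns the entropy budget into an $L^2$-bound on $U$ across \emph{all} weight scales, and then controls $t(U)$ via the Cauchy--Schwarz chain $t(U)\leq\|U\|_2^3$ (Lemma~\ref{lem:I_p-triangle}) in place of Kruskal--Katona.

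Your degree-based decomposition is the right instinct and mirrors the paper's isolation of the set $B=\{x:f(x)>b\}$ where $f$ is the row-average of $U$ — but the paper decomposes $U$ itself, not a thresholded edge set $H$, and the split $\int_B f^2$ vs.\ $\int_{B^c}f^2$ is what makes the clique/hub dichotomy additive rather than lossy as in the naive KK-plus-cherry bound you correctly identify as inadequate. Two further points: (i) in the regime $n^{-1}\ll p\ll n^{-1/2}$ the paper argues that $\lambda(B)\ll 1/n$ forces $B=\emptyset$ by integrality and hence $s(U)=o(p^2)$; merely noting that the hub construction would need fewer than one vertex does not, by itself, exclude near-hub configurations from the lower-bound side. (ii) The paper first passes from $\phi(n,p,\delta)$ to the graphon problem $\phi(p,\delta)$ via Lemma~\ref{lem:var-relate}, which decouples $p\to 0$ from $n\to\infty$ and avoids carrying discrete error terms through the entire analysis; you may find this reduction simplifies whatever salvage of the thresholding you attempt.
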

One can then deduce the following from the above result~\eqref{eq-cd} of Chatterjee and Dembo.
\begin{corollary}\label{cor:upper-tail}
For any $\delta>0$, if $  n^{-1/42} \log n\leq p \ll 1$ then
  \begin{equation*}
    \P\left( t(\cG_{n,p}) \geq (1 +
    \delta)p^3\right) = \exp\left[ - (1-o(1))\min\left\{\tfrac12 \delta^{2/3}\,,\, \tfrac13 \delta\right\}n^2 p^2 \log(1/p)\right]\,.
  \end{equation*}
\end{corollary}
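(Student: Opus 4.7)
The plan is to derive Corollary \ref{cor:upper-tail} as an immediate consequence of Theorem \ref{thm:var} and the Chatterjee--Dembo equality \eqref{eq-cd}: essentially nothing is required beyond checking that both inputs are available in the specified range of $p$ and then chaining the error terms in the exponent.

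First I would verify compatibility of the regimes. The equality \eqref{eq-cd} holds precisely for $n^{-1/42}\log n \leq p \ll 1$, which coincides with the hypothesis of the corollary. Moreover, since $n^{-1/42}\log n \gg n^{-1/2}$ for large $n$, the hypothesis $p \gg n^{-1/2}$ of formula \eqref{eq:var-ans} in Theorem \ref{thm:var} is automatically satisfied throughout the same range, yielding
\[
\phi(n,p,\delta) = (1+o(1))\min\!\left\{\tfrac12\delta^{2/3},\tfrac13\delta\right\} n^2 p^2 \log(1/p).
\]

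Substituting this asymptotic for $\phi(n,p,\delta)$ into \eqref{eq-cd} and absorbing the product $(1-o(1))(1+o(1))$ into a single $1-o(1)$ factor in the exponent immediately gives
\[
\P\!\left(t(\cG_{n,p}) \geq (1+\delta)p^3\right) = \exp\!\left[-(1-o(1))\min\!\left\{\tfrac12\delta^{2/3},\tfrac13\delta\right\}n^2 p^2 \log(1/p)\right],
\]
which is the desired claim. The entire substantive work is therefore contained in Theorem \ref{thm:var}, not in the corollary: on the upper-bound side one would exhibit two near-optimal constructions --- a planted clique of size roughly $\delta^{1/3}np$, contributing the $\tfrac12\delta^{2/3}$ term, and a planted hub set of size roughly $\delta n p^2/3$, contributing the $\tfrac13\delta$ term --- and on the lower-bound side one would argue, via a careful decomposition of the edge weights of an arbitrary feasible $G$, that no weighted graph achieves a strictly smaller relative entropy than the minimum of these two candidate values. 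The corollary itself requires no further input beyond this combination.
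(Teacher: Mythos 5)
Your proposal is correct and follows exactly the route the paper intends: combining Theorem~\ref{thm:var} (first regime, valid since $n^{-1/42}\log n \gg n^{-1/2}$) with the Chatterjee--Dembo equality~\eqref{eq-cd}, then merging the $(1\pm o(1))$ factors in the exponent. The paper gives no separate proof of the corollary precisely because it is this immediate deduction.
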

The lower bound is explained by forcing either a set of
$k=\delta^{1/3} np$ vertices to be a clique (with probability $p^{\binom{k}2} = p^{(\delta^{2/3}/2+o(1))n^2 p^2}$) or a set of $\ell=\frac13 \delta np^2$ vertices to be
connected to all other vertices (with probability $p^{\ell(n-\ell)} = p^{(\delta/3+o(1))
  n^2p^2}$), the latter being preferable if and only if $\delta < 27/8$.

In fact, these constructions for the lower bound on $\P\left(t(\cG_{n,p})\geq (1+\delta)p^3\right)$ further explain the two separate regimes in Theorem~\ref{thm:var}. When $p\ll 1/\sqrt{n}$, the second (bipartite) construction --- involving $\ell \asymp np^2$ vertices --- ceases to be a viable option, as then we have $\ell=o(1)$.
As remarked next, this translates into a qualitative difference between the solutions of the variational problem in each of these regimes, expressed in terms of
\[
s(G) := n^{-3} \sum_{1 \leq i \leq n} \big ( \sum_{1\leq j \leq n} g_{ij}\big)^2\,,
\]
equivalent to the asymptotic density of triangles minus an edge (i.e., $K_{1,2}$ homomorphisms,
which in $\cG_{n,p}$ have average density $p^2$, and so an excess of $\frac13 \delta p^2$ in their density, of which a $p$-fraction forms triangles via an extra edge, translates to $\delta p^3$ additional labeled triangles).
\begin{remark}
  \label{rem:var-K12}
  The proof of Theorem~\ref{thm:var} shows that for any fixed $0<\delta<\frac{27}8$, if $G_n\in\sG_n$ is a sequence of weighted graphs satisfying $t(G_n)\geq (1+\delta) p^3$ and $I_p(G_n) \sim \phi(n,p,\delta)$
  then
\[
  \lim_{n\to\infty} \frac{s(G_n)}{p^2} = \begin{cases}
    1+\delta/3 & \mbox{if }n^{-1/2} \ll p \ll 1\,,\\
    1        & \mbox{if }n^{-1} \ll p \ll n^{-1/2}\,.
  \end{cases}
\]
For fixed $\delta > \frac{27}8$, the term $1+\delta/3$ in the first case ($n^{-1/2} \ll p \ll 1$) is replaced by $1$.
\end{remark}
Regarding the behavior when $p\asymp n^{-1/2}$, there one expects a similar structure: i.e., whenever the bipartite construction is preferable, the optimal solution should feature a large bipartite subgraph while adhering to the integrality restrictions. It is plausible that methods similar to those used in this work can establish the solution in that regime as well.

Our arguments extend to yield analogous results for $k$-clique counts,
where, for instance, the right-hand side of~\eqref{eq:var-ans} (giving the asymptotics of the rate function provided $n^{-\alpha'} \ll p \ll 1$ for $\alpha'(k)>0$)  is replaced by $\min\{\frac12\delta^{2/k},\delta/k\}$; see Theorem~\ref{thm:var-clq} and Corollary~\ref{cor:upper-tail-clq}.
For a general graph on $k$ vertices, the \emph{order} of the rate function at $p\geq n^{-\alpha''}$ is given by Corollary~\ref{cor:gen-H}.\footnote{In our follow-up work \cite{BGLZ} jointly with Bhattacharya and Ganguly, we extend this and find the asymptotic rate function for every graph $H$. The rate is given in terms of a certain independence polynomial, and exhibits a dichotomy with respect to $\delta$ if and only if $H$ is a regular graph. See~\cite{BGLZ} for the statements of these newer results.}

Finally, it is worthwhile mentioning that even without appealing to the new machinery of~\cite{CD}, if $p$ tends to 0 sufficiently slowly with $n$ --- namely, $(\log n)^{-1/6} \ll p \ll 1$ --- then
Eq.~\eqref{eq-cd} (stating that the variational problem~\eqref{eq:var} gives the asymptotic rate function for large deviations of triangles) follows essentially from the framework of Chatterjee and Varadhan~\cite{CV11} (and similarly for any fixed subgraph); instead of using the theory of graph limits or Szemer\'edi's regularity lemma, one can derive this statement by appealing in their framework to the \emph{weak} regularity lemma of Frieze and Kannan~\cite{FK99} (we include this reduction for completeness; see~\S\ref{sec:weak-ldp}).

\subsection*{Notation and organization}
On occasion we will write $f_n \lesssim g_n$ instead of $f_n=O(g_n)$ for brevity, as well as $f_n \ll g_n$ instead of $f_n=o(g_n)$ (similarly for $f_n \gtrsim g_n$ and $f_n \gg g_n$); we let $f_n \sim g_n$ denote $f_n = (1+o(1))g_n$, and $f\asymp g$ denotes $f_n \lesssim g_n \lesssim f_n$.

This paper is organized as follows. In~\S\ref{sec:cont-var} we give upper and lower bounds for the discrete variational problem~\eqref{eq:var}: the construction of a clique/bipartite subgraph, and a (relaxed) continuous variational problem, whose solution we denote by $\phi(\delta,p)$ (notice this variant no longer depends on $n$; see Eq.~\eqref{eq:var2} below). The analysis of the latter appears in~\S\ref{sec:solve-var}, and \S\ref{sec:cliques} contains the extension of these results to $k$-cliques for any fixed $k$. Finally, \S\ref{sec:weak-ldp} contains the reduction of the upper tail to the variational problem~\eqref{eq:var} when $p\to 0$  as a poly-log of $n$.

\section{A continuous variational problem}\label{sec:cont-var}
In this section we compare the optimum $\phi(n,p,\delta)$ of the variational problem~\eqref{eq:var} with an analogue, $\phi(p,\delta)$, that eliminates the dependence on $n$.
Before introducing this variant, we begin with the straightforward upper bound on $\phi(n, p, \delta)$, which involves constructing $G \in \sG_n$ with $I_p(G)$ that attains the right-hand side of \eqref{eq:var-ans}. There are two competing candidates.
\begin{itemize}
\item Let $g_{ij} = 1$ whenever $1 \leq i < j \leq a$ for some integer $a$ to be
  specified later, and
  $g_{ij} = p$ for all other $i,j$. Then we have
  \[
  t(G) \geq n^{-3}\left[a(a-1)(a-2) + (n(n-1)(n-2) - a(a-1)(a-2)) p^3\right]
  \]
  and
  \[
  I_p(G) = \tbinom{a}{2} I_p(1) = \tbinom{a}{2} \log ( 1/p)\,.
  \]
  So, we can choose $a = (\delta^{1/3} + o(1)) p n$ so that $t(G)
  \geq (1 + \delta)p^3$ and
  \[
  I_p(G) = \bigg(\frac{\delta^{2/3}}{2} + o(1)\bigg)
  n^2 p^2 \log (1/p)\,.
  \]
\item Let $g_{ij} = 1$ whenever $1 \leq i \leq a$ and $i < j$ and
  $g_{ij} = p$ otherwise. Then
  \[
  t(G) \geq n^{-3}\left[3a(n-a)(n-a-1) p + (n-a)(n-a-1)(n-a-2) p^3\right]
  \]
  and
  \[
  I_p(G) = a\bigg(n - \frac{a+1}{2}\bigg) I_p(1) = a\bigg(n -
    \frac{a+1}{2}\bigg) \log(1/p)\,.
  \]
  So, we can choose $a = (\delta/3 + o(1)) p^2 n$ so that $t(G) \geq
  (1 + \delta)p^3$ and
  \[
  I_p(G) =
  \bigg(\frac{\delta}3 + o(1)\bigg)
    n^2 p^2 \log (1/p)\,.
    \]
\end{itemize}
When $p \gg n^{-1/2}$, both constructions are valid, and taking the one
with smaller $I_p(G)$ (the choice depends on the value of $\delta$;
when $\delta \geq 27/8$ we use the first construction and when
$\delta < 27/8$ we use the second construction) yields the upper bound on
$\phi(n,p,\delta)$ in \eqref{eq:var-ans}.

When $n^{-1} \ll p \ll n^{-1/2}$, the second construction is no longer
valid (since $a \ll 1$), but the first construction remains valid. Thus,
we obtain the upper bound on $\phi(n,p,\delta)$ in
\eqref{eq:var-ans-b}.

\medskip

Next, consider the following variant of the above variational problem. Whereas in $\phi(n, p, \phi)$ the variational
problem occurs in the space of weighted graphs on $n$ vertices, in the
new variational problem $\phi(p,\phi)$, we consider the space of
graphons, so that $n$ does not appear (and the dependence of $p$ on
$n$ plays no role). Here a \emph{graphon} is
a symmetric measurable function $W \colon [0,1]^2 \to
[0,1]$. Let $\cW$ denote the set of all graphons.

Given any graphon $W$ and function $f \colon \RR \to \RR$, we use the shorthand notation
\[
\EE[f(W)] := \int_{[0,1]^2}f(W(x,y)) \, dxdy \,.
\]
For example, 
$\EE W^2 = \int_{[0,1]^2} W^2\,dxdy$, and $\EE[I_p(W)] = \int_{[0,1]^2} I_p(W(x,y)) \, dxdy$.

\begin{definition*}
  [Continuous variational problem]
 For $\delta>0$ and $0<p<1$, let
\begin{equation}
  \label{eq:var2}
  \phi(p,\delta) := \inf\left\{ \frac12 \EE[I_p(W)] : W \in \cW \text{ such that } t(W)
  \geq (1 + \delta) p^3 \right\}\,,
\end{equation}
where the triangle density $t(W)$ of $W$ is defined by
\[
t(W) := \int_{[0,1]^3} W(x,y)W(x,z)W(y,z) \, dxdydz \,.
\]
\end{definition*}

The two variational problems \eqref{eq:var} and \eqref{eq:var2} are
related by the following inequality.

\begin{lemma} \label{lem:var-relate}
  For any $p, n, \delta$, we have
  \begin{equation} \label{eq:relate}
    \phi(p, \delta) \leq \frac{1}{n^2} \phi(n, p, \delta) + \frac{1}{2n}I_p(0).
  \end{equation}
\end{lemma}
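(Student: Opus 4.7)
The plan is to lift a (near-)optimal weighted graph $G$ for $\phi(n,p,\delta)$ to a step-function graphon $W_G$ that is feasible for the continuous variational problem, and compare the entropy functionals directly. Concretely, given $G = (g_{ij}) \in \sG_n$, partition $[0,1]$ into $n$ intervals $I_1,\dots,I_n$ of length $1/n$ and define $W_G(x,y) = g_{ij}$ whenever $(x,y) \in I_i \times I_j$. This is a valid symmetric graphon since $g_{ij} = g_{ji}$.

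First I would compute $t(W_G)$: by construction
\[
  t(W_G) = \int_{[0,1]^3} W_G(x,y)W_G(x,z)W_G(y,z)\,dx\,dy\,dz = \frac{1}{n^3}\sum_{1 \leq i,j,k \leq n} g_{ij} g_{jk} g_{ik} = t(G),
\]
so if $t(G) \geq (1+\delta)p^3$ then $W_G$ is feasible for \eqref{eq:var2}. Next I would compute the relative entropy: since $W_G$ takes the value $g_{ij}$ on the square $I_i \times I_j$ of area $1/n^2$,
\[
  \EE[I_p(W_G)] = \frac{1}{n^2}\sum_{i,j=1}^n I_p(g_{ij}) = \frac{2}{n^2} \sum_{1 \leq i < j \leq n} I_p(g_{ij}) + \frac{1}{n^2}\sum_{i=1}^n I_p(g_{ii}) = \frac{2}{n^2}I_p(G) + \frac{1}{n}I_p(0),
\]
using $g_{ii}=0$ and symmetry. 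Dividing by $2$ yields $\frac12 \EE[I_p(W_G)] = \frac{1}{n^2}I_p(G) + \frac{1}{2n}I_p(0)$.

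Combining these two identities: if $G$ is feasible for \eqref{eq:var} then $W_G$ is feasible for \eqref{eq:var2} with objective value exactly $\frac{1}{n^2}I_p(G) + \frac{1}{2n}I_p(0)$. Taking the infimum over feasible $G$ (or, if the infimum is not attained, a minimizing sequence together with the continuity of $I_p$) gives the claimed inequality
\[
  \phi(p,\delta) \leq \frac{1}{n^2}\phi(n,p,\delta) + \frac{1}{2n}I_p(0).
\]

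I do not expect any serious obstacle here: the only subtle point is the diagonal contribution $\sum_i I_p(g_{ii}) = n\,I_p(0)$, which produces exactly the $\frac{1}{2n}I_p(0)$ error term on the right-hand side, and this is a lower-order correction in all later applications since $I_p(0) = -\log(1-p) = O(p)$.
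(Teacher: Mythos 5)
Your proof is correct and follows exactly the same route as the paper: construct the step graphon $W^G$ from the weighted graph $G$ by partitioning $[0,1]$ into $n$ equal intervals, observe that $t(W^G) = t(G)$ and $\tfrac12\EE[I_p(W^G)] = n^{-2}I_p(G) + (2n)^{-1}I_p(0)$ (the diagonal squares contributing the latter term), and pass to the infimum. You have simply spelled out the arithmetic that the paper leaves implicit.
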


\begin{proof}
  For any $G \in \sG_n$, we can construct a $W^G \in \cW$ by dividing
  $[0,1]$ into $n$ equal intervals $I_1, \dots, I_n$, and setting
  $W(x, y) = g_{ij}$ whenever $x \in I_i$ and $y \in I_j$. Then $t(W^G)
  =  t(G)$ and $\tfrac12 \EE[I_p(W^G)] = n^{-2} I_p(G) + (2n)^{-1} I_p(0)$,
  where the extra term $(2n)^{-1}I_p(0)$ is due to the zero entries
  $g_{ii} = 0$ which were not included in $I_p(G)$.
\end{proof}

The following theorem, providing a solution to the variational problem $\phi(p,\delta)$, is proved in the next section (see~\S\ref{sec:solve-var'}).

\begin{theorem}
  \label{thm:var2}
  Fix $\delta > 0$. Then
\begin{equation} \label{eq:var2-ans}  \lim_{p \to 0} \frac{\phi(p,\delta)}{p^2 \log(1/p)} =
  \min\bigg\{\frac{\delta^{2/3}}{2} \;,\; \frac{\delta}{3}\bigg\}\,.
\end{equation}
\end{theorem}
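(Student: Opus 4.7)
The plan is to prove matching upper and lower bounds for $\phi(p,\delta)$. For the upper bound, I would translate the two discrete constructions from Section~2 into graphons: the ``clique'' $W_{\mathrm{cl}} = p + (1-p)\mathbf{1}_{[0,a]^2}$ with $a = \delta^{1/3}p$, and the ``hub'' $W_{\mathrm{hub}} = p + (1-p)\mathbf{1}_{\{\min(x,y)\le \ell\}}$ with $\ell = \delta p^2/3$. Direct calculation verifies $t(W_i) \ge (1+\delta)p^3$ for both and yields $\tfrac12\EE[I_p(W_{\mathrm{cl}})] = (\tfrac12\delta^{2/3}+o(1))p^2\log(1/p)$ and $\tfrac12\EE[I_p(W_{\mathrm{hub}})] = (\tfrac13\delta+o(1))p^2\log(1/p)$; the minimum gives the upper bound.

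For the lower bound, let $W$ satisfy $t(W)\geq(1+\delta)p^3$. Replacing $W$ with $\max(W,p)$ never increases $I_p$ (since $p$ minimizes $I_p$) and never decreases $t$, so one may assume $W\ge p$. Writing $W=p+U$ with $U\ge 0$ and expanding,
\[
t(W) = p^3 + 3p^2\,\mean(U) + 3p\int d_U(x)^2\,dx + t(U), \qquad d_U(x)=\int U(x,y)\,dy,
\]
the hypothesis becomes $3p^2\mean(U)+3p\int d_U^2+t(U)\ge \delta p^3$. Pick a slowly vanishing threshold $\eta=\eta(p)\to 0$ with $\eta/p\to\infty$ (e.g., $\eta=1/\log\log(1/p)$), set $U_H = U\,\mathbf{1}_{U\ge\eta}$, $U_L = U-U_H$, and $m=\mean(U_H)$. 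On $\{U\ge\eta\}$ one has $W\ge\eta$, so combining the pointwise bound $I_p(x)\ge x\log(x/p)-(x-p)$ with $\log(\eta/p)=(1-o(1))\log(1/p)$ gives the key estimate
\[
\EE[I_p(W)] \;\ge\; (1-o(1))\log(1/p)\cdot m.
\]
It therefore suffices to show $m \ge (\min\{\delta^{2/3},2\delta/3\}-o(1))\,p^2$.

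To obtain this, first dispose of two wasteful regimes: (i) if $\mean(U)\gg\sqrt{p\log(1/p)}$, Jensen gives $\EE[I_p(W)]\ge I_p(p+\mean(U))\gg p^2\log(1/p)$ and we are done; (ii) if $\int_{U<\eta}U^2\gtrsim p^2$, the Taylor bound $I_p(W)\gtrsim (W-p)^2/p$ on $\{U<\eta\}$ yields $\EE[I_p(W)]\gtrsim p\gg p^2\log(1/p)$, and we are also done. So one may assume $\mean(U)$ is negligible (making $3p^2\mean(U)=o(p^3)$) and that the $L^2$-mass of $U$ lives essentially on $\{U\ge\eta\}$. Using the Kruskal--Katona type bounds $\int d_{U_H}^2 \le \mean(U_H)=m$ and $t(U_H)\le(\int U_H^2)^{3/2}\le m^{3/2}$ (both from $U_H\le 1$), and controlling the mixed triangle terms in $t(U)=t(U_H)+3\tau(U_H,U_H,U_L)+3\tau(U_H,U_L,U_L)+t(U_L)$ via Cauchy--Schwarz (exploiting $\|U\|_2=O(\sqrt m)$ under the above reductions, so e.g.\ $\tau(U_H,U_H,U_L)\le\|U_H\|_2^2\|U\|_2=O(m^{3/2})$), the constraint reduces in the limit to $\delta p^3 \lesssim 3pm + m^{3/2} + o(p^3)$.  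Case-splitting on whether $3pm$ or $m^{3/2}$ dominates yields $m\gtrsim \delta^{2/3}p^2$ and $m\gtrsim\delta p^2/3$ respectively, and combining with the entropy estimate gives the lower bound up to a factor of $2$.

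\textbf{Main obstacle.} The bound $m\gtrsim \delta p^2/3$ in the ``star-dominant'' regime is short by a factor of $2$ compared to the hub's target $2\delta p^2/3$, and closing this gap is the delicate step. The source of the discrepancy is the inequality $\int d_{U_H}^2\le m$: it is tight only when $d_{U_H}\in\{0,1\}$ (essentially trivial $U_H$), while for hub-like $U_H$ one actually has $\int d_{U_H}^2 \approx m/2$. Recovering the factor of $2$ requires a finer relation between $s(U_H)$ and $\mean(U_H)$ in the sparse regime --- for instance, via the symmetrized identity $2\int d_U^2 = \int U(x,y)[d_U(x)+d_U(y)]\,dxdy$ combined with a structural argument showing that at a near-extremizer, the weighted average of $d_U(x)+d_U(y)$ on the support of $U_H$ is at most $1+o(1)$ rather than $2$. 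Coordinating this refined inequality with the coupled choices of $\eta$, the case-A cutoff, and the bookkeeping of mixed $U_H$--$U_L$ terms --- so that all error terms remain $o(p^3)$ and both sharp constants $\delta^{2/3}$ and $2\delta/3$ emerge cleanly --- is the main technical hurdle.
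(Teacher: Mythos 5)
Your upper bound constructions match the paper exactly, and the initial moves of your lower bound---reducing to $W\ge p$, writing $W=p+U$, expanding $t(W)-p^3$ into the $t(U)$, $s(U)$, $\EE U$ terms, and disposing of the case $\EE U$ not small---are essentially the paper's as well. But you have correctly diagnosed a genuine gap in your argument, and the missing ingredient is not the ``finer relation between $s(U_H)$ and $\mean(U_H)$'' you propose to chase: it is a different decomposition altogether.

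You split $U$ by the \emph{value} of $U(x,y)$ (into $U_H=U\mathbf{1}_{U\ge\eta}$ and $U_L$) and then funnel both the $s$-constraint and the entropy through the single scalar $m=\mean(U_H)$. That funnel is exactly where the factor of $2$ is lost: $\int d_{U_H}^2\le m$ (Cauchy--Schwarz) is tight only when $d_{U_H}\in\{0,1\}$, whereas for the hub extremizer $\int d_{U_H}^2\approx m/2$. Your proposed repair---showing $d_U(x)+d_U(y)\le 1+o(1)$ on the support of $U_H$ at a near-extremizer---would in effect require proving that the set of near-full-degree vertices has small measure and contributes negligibly to the quadratic form, which is a nontrivial structural claim you would then have to combine with the level-set machinery and mixed $U_H$--$U_L$ bookkeeping. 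This is essentially reinventing (a harder version of) what the paper does, so the ``main obstacle'' you flag is real and not a small technicality.

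The paper's proof avoids this entirely by decomposing the \emph{domain} $[0,1]^2$ according to the \emph{degree function} $f(x)=\int U(x,y)\,dy$, not by the value of $U$. Set $B=\{x: f(x)>b\}$ with $b=p^{1/3}$. One shows $\lambda(B)=O(p^2/b)$ and $\int_{B^c}f^2=O(p^2 b)$, so essentially all of $s(U)=\int f^2$ lives on $B$. Then---and this is the step that replaces your lossy Cauchy--Schwarz---one lower-bounds the entropy over the strip $B\times[0,1]$ by Jensen's inequality in the $y$-variable followed by the quadratic bound $I_p(p+y)\ge(1-o(1))y^2 I_p(1)$ (Corollary~\ref{cor:I_p-sq-lower-bd-2}):
\[
\int_{B\times[0,1]} I_p(p+U)\,dx\,dy \;\ge\; \int_B I_p(p+f(x))\,dx \;\ge\; (1-o(1))\,I_p(1)\int_B f^2\,dx \;\ge\; (\delta_2-o(1))\,p^2 I_p(1)\,.
\]
This relates entropy to $s(U)$ \emph{directly}, with no detour through $\mean(U_H)$ and no Cauchy--Schwarz between $\int d^2$ and $\int d$, so no factor of $2$ is lost. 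The triangle contribution is then handled on the \emph{disjoint} region $B^c\times B^c$ (after checking $t(U\mathbf{1}_{B^c\times B^c})\ge(\delta_1-o(1))p^3$), via $\EE[I_p(p+U)]\ge(1-o(1))I_p(1)\,t(U)^{2/3}$. Because the two regions are disjoint, the contributions add, giving $(\tfrac12\delta_1^{2/3}+\delta_2-o(1))p^2 I_p(1)$, and concavity of $\delta_1\mapsto\tfrac12\delta_1^{2/3}$ shows the minimum over $\delta_1+3\delta_2=\delta$ is at a corner, yielding $\min\{\delta^{2/3}/2,\delta/3\}$. I would recommend replacing your value-threshold cut of $U$ with this degree-threshold cut of the domain and using Jensen plus the quadratic lower bound on $I_p$ to reach $s(U)$ directly.
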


It can already be seen that the solution to the variational problem~\eqref{eq:var}
when $n^{-1/2} \ll p \ll 1$ (i.e., Eq.~\eqref{eq:var-ans}) will readily follow from the combination of Lemma~\ref{lem:var-relate} and Theorem~\ref{thm:var2}.
We defer the full details --- together with the treatment of the regime $n^{-1} \ll p \ll n^{-1/2}$ (which will entail a short modification of the proof of Theorem~\ref{thm:var2}) to the next section following the proof of Theorem~\ref{thm:var2} (see~\S\ref{sec:discrete}).

For now, let
us give the constructions that give tight upper bounds on
$\phi(p,\delta)$ for \eqref{eq:var2-ans} --- precisely the
graphon analogs of the above given constructions for Theorem~\ref{thm:var}.
\begin{itemize}
\item Let $W(x,y) = 1$ whenever $x,y \in [0,a]$ for some $a \in [0,1]$
  to be specified later, and $W(x,y) = p$ elsewhere. Then we have
  \[
  t(W) \geq a^3 + (1-a)^3p^3
  \]
  and
  \[
  \frac 12 \EE[I_p(W)] = \frac{1}{2} a^2 I_p(1) = \frac{1}{2} a^2 \log (1/p)\,.
  \]
  So, we can choose $a = (\delta^{1/3} + o(1))p$ so that $t(W) \geq
  (1+\delta)p^3$ and
  \[
  \frac 12 \EE[I_p(W)] = \bigg(\frac{\delta^{2/3}}{2} + o(1)\bigg) p^2 \log(1/p)\,.
  \]
\item Let $W(x,y) = 1$ whenever $\min\{x,y\} \leq a$ and $W(x,y) = p$
  otherwise. Then
  \[
  t(W) \geq 3a(1-a)^2p + (1-a)^3p^3
  \]
  and
  \[
  \frac 12 \EE[I_p(W)] = a\bigg(1 - \frac{a}{2}\bigg) I_p(1) = a\bigg(1 - \frac{a}{2}\bigg)
  \log (1/p)\,.
  \]
  So, we can choose $a = (\delta/3 + o(1)) p^2$ so that $t(G) \geq (1
  + \delta) p^3$ and
  \[
  \frac 12 \EE[I_p(W)] = \paren{\frac{\delta}{3} + o(1)} p^2 \log(1/p)\,.
  \]
\end{itemize}
Depending on the value of $\delta$ (when $\delta \geq 27/8$ use the
first construction; when $\delta < 27/8$ use the second), these two examples together prove the upper bound to $\phi(p,\delta)$
in \eqref{eq:var2-ans}.

\section{Solving the variational problem}\label{sec:solve-var}

\subsection{Proof of Theorem~\ref{thm:var2}}\label{sec:solve-var'}
Throughout this proof, we will occasionally require various technical properties of the function
$I_p$ when $p \to 0$; the proofs of these are deferred to \S\ref{sec:I_p}.

Let $W \in \cW$ satisfy $t(W) \geq (1 + \delta)p^3$. We wish to show
  that
  \begin{equation*} 
  \frac12\EE[I_p(W)] \geq (1-o(1))
  \min\bigg\{\frac{\delta^{2/3}}{2} \;,\; \frac{\delta}{3}\bigg\} p^2
    I_p(1)\,.
\end{equation*}
Since $I_p$ is decreasing in $[0,p]$ and increasing in $[p,1]$, we may
assume without loss of generality that $W \geq p$ and $t(W) =
(1+\delta)p^3$. Write $W = U + p$, so that $0 \leq U \leq 1-p$. Letting
\[
s(U) := \int_{[0,1]^3} U(x,y)U(x,z) \, dxdydz =
\int_{[0,1]} \paren{\int_{[0,1]} U(x,y) \, dy}^2 \, dx\,,
\]
we have
\begin{equation}\label{eq:K3-split}
t(W) - p^3 = t(U) + 3p s(U) + 3p^2 \EE U = \delta p^3\,.
\end{equation}
Now write
\begin{equation*}
  t(U) = \delta_1 p^3, \qquad
  s(U) = \delta_2 p^2, \quad \text{and} \quad
  \EE U = \delta_3 p\,.
\end{equation*}
Then $\delta_1 + 3\delta_2 + 3\delta_3 =
\delta$.
We may assume, for instance, that
\[
\delta_3 \leq \sqrt{p } \log (1/p)= o(1), \quad \text{so that } \EE U = o(p)\,,
\]
since otherwise by the convexity of
$I_p$ and the fact that $I_p(p+x)\sim x^2/(2p)$ for $x \ll p$ (see Lemma~\ref{lem:I_p-asymp} below) we would already have
\[
\EE[I_p(W)] \ge I_p(\EE W) = I_p(p + \EE U) \geq I_p(p+p^{3/2}\log(1/p)) \gg p^2 I_p(1)\,.
\]
The above decomposition reduces the problem to
studying the following:
\begin{equation*}
\phi'(p, \delta_1, \delta_2) := \inf \Big\{\frac12\EE[I_p(p + U)] :\, U \in \cW
\text{ so that } 0 \leq U \leq 1-p, \ t(U) \geq \delta_1 p^3, \text{ and } s(U) \geq
\delta_2 p^2\Big\}\,.
\end{equation*}
The (asymptotic) solution to this variational problem is given by the
following key lemma.
\begin{lemma} \label{lem:divide-conquer}
  Fix $D > 0$. Then
  \[
  \phi'(p,\delta_1,\delta_2) = \bigg(\frac{\delta_1^{2/3}}{2} +
  \delta_2 + o(1)\bigg) p^2  I_p(1)
  \]
  uniformly for all $\delta_1, \delta_2 \in [0,D]$ as $p \to 0$.
\end{lemma}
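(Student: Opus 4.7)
\emph{Upper bound.} This is immediate from a two-gadget construction: set $U=1-p$ on the disjoint union of a clique block $[0,\delta_1^{1/3}p]^2$ and a bipartite strip $\{\min(x,y)\le\delta_2 p^2\}\setminus[0,\delta_1^{1/3}p]^2$, and $U=0$ elsewhere. Direct computation gives $t(U)\ge\delta_1 p^3$ (from the clique) and $s(U)\ge\delta_2 p^2$ (from the strip); since the two blocks are disjoint, their $I_p$-entropies add, yielding $\tfrac12\EE[I_p(p+U)]=(\tfrac12\delta_1^{2/3}+\delta_2+o(1))\,p^2 I_p(1)$ uniformly in $\delta_1,\delta_2\in[0,D]$.

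\emph{Lower bound, setup.} Fix $\eta\in(0,1)$, to be sent to $1$ after $p\to 0$, and let $A=\{U\ge\eta\}$. The pointwise asymptotic $I_p(p+\eta)=(\eta+o(1))\log(1/p)$ (which should be part of Lemma~\ref{lem:I_p-asymp}) gives
\[
\tfrac12\EE[I_p(p+U)]\;\ge\;\tfrac12\int_A I_p(p+U)\;\ge\;\tfrac{\eta-o(1)}{2}\,|A|\,\log(1/p)\,,
\]
so it suffices to show $|A|\ge (\delta_1^{2/3}+2\delta_2-o(1))\,p^2$, after which $\eta\uparrow 1$ recovers the sharp constant. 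Two complementary arguments give lower bounds on $|A|$.

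\emph{Triangle and star contributions.} (T)~The graphon Kruskal--Katona inequality $t(U')\le(\EE[{U'}^2])^{3/2}$ (two applications of Cauchy--Schwarz) applied to $U_H:=U\one_A$, combined with the a priori bound $\EE U=o(p)$ (noted just before the lemma, from convexity of $I_p$) to control the $O(\eta^3)$ contribution of $U_L:=U-U_H$ and its cross-terms with $U_H$ in $t(U)$, yields $\EE[U_H^2]\ge(\delta_1^{2/3}-o(1))p^2$ and hence $|A|\ge(\delta_1^{2/3}-o(1))p^2$ (since $U_H\le 1$). (S)~By symmetry of $A$, for each $\tau\in(0,1]$ the super-level set $E_\tau:=\{x:|A_x|\ge\tau\}$ satisfies the inclusion--exclusion bound $|A|\ge 2\tau\,|E_\tau|-|E_\tau|^2$, since both $E_\tau\times[0,1]$ and $[0,1]\times E_\tau$ lie in $A$. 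Using $s(U_H)\le\int|A_x|^2\,dx=\int_0^1 2\tau |E_\tau|\,d\tau$ together with $s(U_H)\ge (\delta_2-o(1))p^2$ (controlling the cross-terms in $s$ analogously), and optimizing in $\tau$ (the relevant choice being $\tau\uparrow 1$, as in the bipartite construction), yields $|A|\ge(2\delta_2-o(1))p^2$.

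\emph{Additivity — the main obstacle.} The crux is to show that (T) and (S) \emph{add} to give $|A|\ge(\delta_1^{2/3}+2\delta_2-o(1))p^2$, rather than merely the maximum of the two. Heuristically, a clique of side $\asymp\delta_1^{1/3}p$ has row-sums bounded by $\delta_1^{1/3}p=o(1)$ and so contributes only $O(\delta_1 p^3)=o(\delta_2 p^2)$ to $s(U)$; conversely, a bipartite strip of width $\delta_2 p^2$ contributes only $O(p\cdot s(U))=o(\delta_1 p^3)$ triangles. Making this precise requires a structural decomposition $A=A_{\text{clq}}\sqcup A_{\text{bip}}$ (up to an overlap of measure $o(p^2)$), where $A_{\text{clq}}$ consists of $(x,y)\in A$ with $|A_x|,|A_y|\le\tau$ for an appropriately chosen threshold $\tau$ and $A_{\text{bip}}=A\setminus A_{\text{clq}}$; applying (T) to $A_{\text{clq}}$ and (S) to $A_{\text{bip}}$ then adds to give the claimed bound. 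Sending $\eta\uparrow 1$ after $p\to 0$ yields the desired asymptotic, uniformly in $\delta_1,\delta_2\in[0,D]$.
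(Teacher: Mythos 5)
Your upper bound construction is essentially the paper's (union of a clique block and a bipartite strip with $U=1-p$ there), and it is fine up to minor bookkeeping. The lower bound, however, takes a different route from the paper that contains a genuine error, not just a gap.

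Your plan is to threshold $A=\{U\ge\eta\}$, bound $\tfrac12\EE[I_p(p+U)]\gtrsim\tfrac{\eta}{2}|A|\log(1/p)$, and then argue $|A|\ge(\delta_1^{2/3}+2\delta_2-o(1))p^2$. Already step (T) of that program is false: it is not true that the cross-terms and $U_L:=U\one_{U<\eta}$ contribute $o(p^3)$ to $t(U)$, and hence not true that $\EE[U_H^2]\ge(\delta_1^{2/3}-o(1))p^2$. Take, for a fixed $c\in(0,\eta)$, the graphon $U=c\,\one_{[0,a]^2}$ with $a=(\delta_1^{1/3}/c)\,p$. Then $t(U)=\delta_1p^3$, $\EE U=O(p^2)=o(p)$, and $\tfrac12\EE[I_p(p+U)]=\tfrac12 a^2 I_p(p+c)\sim\tfrac{\delta_1^{2/3}}{2c}\,p^2\log(1/p)$, which is within a bounded factor of the target (and $\to$ the target as $c\uparrow 1$); yet $A=\emptyset$, so $U_H\equiv 0$ and $\EE[U_H^2]=0$. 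This defeats any approach that routes the entropy lower bound through $\lambda(\{U\ge\eta\})$ with $\eta$ fixed: the entropy contribution of points with $U(x,y)<\eta$ is \emph{not} ignorable, and the a priori bound $\EE U=o(p)$ (or even $\EE[U^2]=O(p^2)$) only gives $t(U_L)=O(p^3)$, not $o(p^3)$. The same obstruction hits (S): $s(U_L)=O(p^2)$ in general. Finally, the ``Additivity'' paragraph is only a sketch; it posits a row-sum decomposition of $A$ but never carries it out, and the $E_\tau$ argument does not clearly convert the integral bound $\int_0^1 2\tau|E_\tau|\,d\tau\gtrsim\delta_2 p^2$ into a bound at $\tau$ near $1$.

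The paper avoids all of this by working with the quadratic bound $I_p(p+x)\ge(1-o(1))x^2 I_p(1)$ on the full range $0\le x\le 1-p$ (Corollary~\ref{cor:I_p-sq-lower-bd-2}), so that entropy is compared to $\EE[U^2]$ rather than to the measure of a superlevel set. The decomposition is then on \emph{row sums} of $U$ (not on values of $U$): with $f(x)=\int U(x,y)\,dy$, $b=p^{1/3}$, and $B=\{f>b\}$, one shows $\lambda(B)=O(p^2/b)\ll p$. This smallness of $B$ is what makes the two contributions genuinely additive and the cross-term control trivial: the entropy over $B\times[0,1]\cup[0,1]\times B$ captures $\int_B f^2\ge(\delta_2-o(1))p^2$ (hence the $\delta_2$ term), while on the complementary region $B^c\times B^c$ one has $t(U\one_{B^c\times B^c})\ge t(U)-3\lambda(B)\EE U\ge(\delta_1-o(1))p^3$, so Lemma~\ref{lem:I_p-triangle} gives the $\delta_1^{2/3}/2$ term. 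The two regions are disjoint, so the entropies add with no further argument. If you want to rescue your approach, you would need to replace $A=\{U\ge\eta\}$ with a degree-based set of measure $\ll p$, and replace the ``$U\ge\eta\Rightarrow I_p(p+U)\gtrsim\log(1/p)$'' step with the uniform quadratic lower bound.
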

Assuming Lemma~\ref{lem:divide-conquer}, let us finish
the proof of Theorem~\ref{thm:var2}. We have
\begin{align*}
\frac12 \EE[I_p(W)]
&\geq \min\{\phi'(p,\delta_1,\delta_2) :\, \delta_1 + 3\delta_2
= \delta - o(1)\}
\\
&= (1-o(1))\min\bigg\{\frac{\delta_1^{2/3}}{2} + \delta_2 :\, \delta_1 +
  3\delta_2 = \delta - o(1)\bigg\}  p^2I_p(1)\,.
\end{align*}
Note that if we fix the value of $\delta_1 + 3\delta_2$, then
$\delta_1^{2/3}/2 + \delta_2$ is minimized when one of $\delta_1$ and
$\delta_2$ is set to zero. It follows that
\[
\frac12 \EE[I_p(W)] \geq (1-o(1))\min\bigg\{\frac{\delta^{2/3}}{2}\;,\;
  \frac{\delta}{3}\bigg\} p^2I_p(1)\,.
\]
We have thus established the desired lower bound for $\phi(p,\delta)$ in
Theorem~\ref{thm:var2}, while the upper bound was already given in~\S\ref{sec:cont-var} (immediately after
the statement of the theorem). This completes the proof of the
Theorem~\ref{thm:var2} modulo Lemma~\ref{lem:divide-conquer}.

\medskip

Towards the proof of Lemma~\ref{lem:divide-conquer}, we need the following result, showing how to lower bound $\EE[I_p(p + U)]$ given
$t(U)$.
\begin{lemma}
  \label{lem:I_p-triangle}
  For any $U \in \cW$ with $0 \leq U \leq 1-p$ we have
  \[
  \EE[I_p(p + U)] \geq \paren{1 - o(1)}I_p(1) t(U)^{2/3}\,.
  \]
  where $o(1)$ is some quantity that goes to zero as $p \to 0$.
\end{lemma}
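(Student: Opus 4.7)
The plan is to prove the lemma by combining a pointwise lower bound on $u \mapsto I_p(p+u)$ with a Cauchy--Schwarz style bound for the triangle density of a graphon.

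First, I would establish the pointwise inequality
\[
I_p(p+u) \geq (1 - o(1))\, I_p(1)\, u^2 \qquad \text{for all } u \in [0, 1-p],
\]
with $o(1) \to 0$ as $p \to 0$ uniformly in $u$. The starting identity
\[
I_p(x) = x \log(1/p) + (1-x)\log(1/(1-p)) - H(x), \qquad H(x) := -x \log x - (1-x)\log(1-x),
\]
combined with $I_p(1) = \log(1/p)$ and the nonnegativity of the middle term, gives $I_p(p+u) \geq u\, I_p(1) - H(p+u) \geq u\, I_p(1) - \log 2$; this already suffices for $u$ bounded away from $0$ and $1$, since $u \geq u^2$ on $[0,1]$ and $u(1-u) I_p(1) \to \infty$. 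For $u$ close to $0$ I would invoke the Taylor bound $I_p(p+u) \geq u^2/(2(p+u))$ (coming from $I_p''(x) \geq 1/x$), which dominates $u^2 I_p(1)$ because $1/p \gg \log(1/p)$; for $u$ close to $1-p$, the refined bound $I_p(p+u) \geq (p+u)\log((p+u)/p) - u$ is verified by a Taylor expansion of $I_p$ around $x = 1$.

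Next, I would prove the graphon Kruskal--Katona-type inequality $t(U) \leq (\EE U^2)^{3/2}$ by two applications of Cauchy--Schwarz. Integrating out $z$ in the triple integral defining $t(U)$ yields $t(U) \leq \iint U(x,y)\, d(x)\, d(y)\, dx\,dy$ with $d(x) := \bigl(\int U(x,z)^2\,dz\bigr)^{1/2}$, and a second Cauchy--Schwarz applied to this double integral bounds it by $(\EE U^2)^{1/2} \int d(x)^2\,dx = (\EE U^2)^{3/2}$. Integrating the pointwise bound from the first step and combining now gives $\EE[I_p(p+U)] \geq (1-o(1))\, I_p(1)\, \EE U^2 \geq (1-o(1))\, I_p(1)\, t(U)^{2/3}$, as claimed.

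I expect the main obstacle to be the pointwise bound in the regime $u \approx 1-p$. The function $u \mapsto I_p(p+u)/u^2$ is \emph{not} monotone on $(0, 1-p]$: it decreases from order $1/p$ near $u = 0$ and attains its minimum at some interior point before rising back to $I_p(1)/(1-p)^2$ at the endpoint. Consequently the inequality cannot be established by evaluating at either endpoint alone, and a careful expansion of $I_p$ around $x=1$ (together with the small-$u$ and bulk estimates above) is needed to cover the full range uniformly as $p \to 0$.
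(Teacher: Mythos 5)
Your high-level plan coincides with the paper's: reduce to the pointwise inequality $I_p(p+u)\geq(1-o(1))\,u^2\,I_p(1)$ uniformly for $u\in[0,1-p]$, integrate it, and combine with the Kruskal--Katona-type bound $t(U)\leq(\EE U^2)^{3/2}$. Your two-step Cauchy--Schwarz derivation of the latter is the same argument the paper gives (the paper peels off one variable first, you integrate out $z$ first; same computation). Where you genuinely diverge is in how the pointwise bound is obtained. The paper proves it in one stroke by showing that $x\mapsto I_p(p+\sqrt{x})$ is \emph{concave} on $[0,(1-p-1/\log(1/p))^2]$ (Lemma~\ref{lem:I_p-sq-lower-bd-1}), so that a single chord comparison plus monotonicity at the far end gives $I_p(p+x)\geq(1+o(1))x^2 I_p(1)$ uniformly (Corollary~\ref{cor:I_p-sq-lower-bd-2}); you instead propose an explicit three-regime calculus argument. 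That route is viable, but as written it does not yet close. Your ``bulk'' estimate $I_p(p+u)\geq u\,I_p(1)-\log 2$ yields $(1-o(1))u^2 I_p(1)$ only when $u(1-u)\log(1/p)$ dominates $\log 2$, i.e.\ when both $u$ and $1-u$ are $\gg 1/\log(1/p)$; your small-$u$ Taylor bound $u^2/(2(p+u))$ dominates $u^2\log(1/p)$ only when $p+u\lesssim 1/\log(1/p)$; the two windows meet with nothing to spare in the ratio near $u\asymp 1/\log(1/p)$, so the $(1-o(1))$ (as opposed to a constant factor) is not actually delivered there. (The sharper bound $I_p(p+u)\geq(p+u)\log\frac{p+u}{p}-u$, which follows from integrating $I_p''(x)\geq 1/x$ twice, in fact handles this transition and even all of $[0,1-p]$; but note this is strictly stronger than the $u^2/(2(p+u))$ you quote, which comes from the cruder lower bound $I_p''(t)\geq 1/(p+u)$ on the integration interval.) And for $u$ near $1-p$ --- which you correctly flag as the delicate regime, since $I_p(p+u)/u^2$ dips below $I_p(1)$ at an interior point --- you state the needed refinement ``is verified by a Taylor expansion around $x=1$'' without carrying it out; this is precisely where the paper's concavity lemma earns its keep, and where the actual computation would need to appear. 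So: same skeleton, same Cauchy--Schwarz core, a more elementary but less clean path to the pointwise bound, with an honest but unfilled gap in the $u\to 1-p$ regime and an overlooked loss in the transition region $u\asymp 1/\log(1/p)$.
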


\begin{proof}
For $p=o(1)$ and any $0\leq x \leq 1-p$ one has $I_p(p+x) \geq (1+o(1))x^2 I_p(1)$ (as established in Corollary~\ref{cor:I_p-sq-lower-bd-2} below); thus,
  \begin{align*}
    \EE[I_p(p + U)] &= \int_{[0,1]^2}I_p(p+U(x,y)) \, dxdy \\
    &\geq \paren{1 - o(1)}I_p(1)\int_{[0,1]^2} U(x,y)^2 \, dxdy
    \geq \paren{1 - o(1)} I_p(1) t(U)^{2/3}\,,
  \end{align*}
where we will justify the last inequality using the fact that
\begin{equation}\label{eq:cauchy}
  t(U) \leq \paren{\int_{[0,1]^2} U(x,y)^2 \, dxdy}^{3/2}\quad \mbox{for any $U \in \cW$}\,.
  \end{equation}
Indeed,~\eqref{eq:cauchy} follows from the Cauchy--Schwarz inequality:
  \begin{align*}
    t(U) &= \int_{[0,1]^3} U(x,y)U(x,z)U(y,z) \, dxdydz
         \\
         &\leq \int_{[0,1]^2} \paren{\int_{[0,1]} U(x,y)^2 \,
           dx}^{1/2} \paren{\int_{[0,1]} U(x,z)^2 \, dx}^{1/2} U(y,z)
         \,dydz
  \end{align*}
which, by two more applications of the Cauchy--Schwarz inequality, is at most
  \begin{align*}
         &\int_{[0,1]} \paren{\int_{[0,1]^2} U(x,y)^2 \,
           dx dy}^{1/2} \paren{\int_{[0,1]} U(x,z)^2 \,
           dx}^{1/2} \paren{\int_{[0,1]} U(y,z)^2 \, dy}^{1/2}
         \,dz
         \\
         \leq &\paren{\int_{[0,1]^2} U(x,y)^2 \,
           dx dy}^{1/2} \paren{\int_{[0,1]^2} U(x,z)^2 \,
           dxdz}^{1/2} \paren{\int_{[0,1]^2} U(y,z)^2 \, dydz}^{1/2}\,,
  \end{align*}
as required.
\end{proof}

Lemma~\ref{lem:I_p-triangle} already shows that
$\phi'(p,\delta_1,\delta_2) \geq (\delta_1^{2/3}/2 - o(1)) p^2
I_p(1)$. However, this is not enough. To obtain the additional
$\delta_2p^2I_p(1)$ term in the lower bound of $\phi'$, we isolate the high
degree vertices and consider their contributions.

\begin{proof}[\textbf{\emph{Proof of Lemma~\ref{lem:divide-conquer}}}]
  First we prove an upper bound on $\phi'(p,\delta_1,\delta_2)$. Let
  $A$ be the union of the rectangles \[ [0, \delta_1^{1/3}p]^2\,,\quad
  [0,\delta_2p^2]\x [0,1]\,,\mbox{ and }\quad [0,1] \x [0,\delta_2p^2]\,.\] Set $U$
  to be $1-p$ on $A$ and 0 elsewhere. Then we have $t(U) \geq \delta_2p^3$,
  and $s(U) \geq \delta_2p^2$, whereas $\tfrac12 \EE[I_p(p+U)] = \frac12 \lambda(A)
  I_p(1) = (\frac12 \delta_1^{2/3} + \delta_2 + o(1))p^2I_p(1)$, where here
  and in what follows $\lambda$ denotes Lebesgue measure. This proves the upper bound
  on $\phi'(p,\delta_1,\delta_2)$.

  Assume that $\EE[I_p(p+U)] = O(p^2\log(1/p))$ (with an implicit constant that may
  depend on $D$), or else we are done.

  Let $f(x) = \int_{[0,1]} U(x,y) \, dy$.  Let $b = p^{1/3}$ (any
  choice of $b$ with $\sqrt{p\log(1/p)} \ll b \ll 1$ suffices), and $B
  = \{x \mid f(x) > b\} \subseteq [0,1]$. 
  By the convexity of $I_p$ we have
  \[
  \EE[I_p(p + U)] = \int_{[0,1]^2} I_p(p+U(x,y)) \, dxdy \geq \int_{[0,1]} I_p(p + f(x)) \, dx \geq \lambda(B) I_p(p+b)\,.
  \]
Since $I_p(p+b) = (1 + o(1)) b \log (b/p)$ (see Lemma~\ref{lem:I_p-asymp} below),
  \begin{equation} \label{eq:B-upp-bd}
    \lambda(B) \leq \frac{\EE[I_p(p+U)]}{I_p(p+b)} = \frac{O(p^2 \log
      (1/p))}{(1+o(1))b \log (b/p)} = O\paren{\frac{p^2}{b}}\,.
  \end{equation}
Next,
  we have $I_p(p + x) \geq (x/b)^2 I_p(p + b)$ for $x \in
  [0,b]$ (see Lemma~\ref{lem:I_p-sq-lower-bd-1} below); hence,
  \[
  \EE[I_p(p + U)] \geq \int_{[0,1] \setminus B} I_p(p + f(x)) \, dx \geq
  \frac{I_p(p + b)}{b^2}\int_{[0,1] \setminus B} f(x)^2 \, dx\,.
  \]
Therefore,
  \begin{equation} \label{eq:B-f-sq-upper-bd}
  \int_{[0,1] \setminus B} f(x)^2 \, dx \leq \frac{\EE[I_p(p+U)] b^2}{I_p(p+b)}
  = O(p^2 b)\,,
  \end{equation}
  where the last step is by~\eqref{eq:B-upp-bd}. Since $\int_{[0,1]} f(x)^2 \, dx = s(U) \geq \delta_2 p^2$, we
  have
  \[
  \int_B f(x)^2 \, dx \geq (\delta_2 - O(b))p^2 = (\delta_2 - o(1))p^2\,.
  \]
  First applying the convexity of $I_p$, then
  the fact (shown in Corollary~\ref{cor:I_p-sq-lower-bd-2} below) that $I_p(p+x)$ is at least $(1-o(1))x^2 I_p(1)$ for $p=o(1)$,
  and finally~\eqref{eq:B-f-sq-upper-bd}, we obtain
  \begin{align*}
    \int_{B \x [0,1]} I_p(p + U(x,y)) \, dxdy
    &\geq \int_B I_p(p + f(x)) \, dx
    \\
    &\geq (1 - o(1)) \int_{B} f(x)^2  I_p(1) \, dx
    \geq (\delta_2 - o(1)) p^2I_p(1)\,.
  \end{align*}
  Since $U(x,y) = U(y,x)$, we have
  \begin{equation} \label{eq:I_p-U-K12}
    \frac12 \int_{B \x [0,1] \cup [0,1] \x B} I_p(p + U(x,y)) \, dxdy
  \geq (\delta_2 - o(1)) p^2I_p(1) - \frac12 \lambda(B)^2 I_p(1)
  \geq (\delta_2 - o(1)) p^2I_p(1)\,,
\end{equation}
where the last step is due to $\lambda(B) = O(p^2/b) = o(p)$.

We have $\EE[I_p(p+U)] \geq I_p(p + \EE U) $ by convexity of $I_p$. As
$I_p(p+x)$ is increasing for $x \in [0,1-p]$, and
Lemma~\ref{lem:I_p-asymp} tells us that $I_p(p+ C
p^{3/2}\sqrt{\log(1/p)}) \sim \frac12 C^2 p^2\log(1/p)$ for each fixed
$C > 0$ as $p \to 0$, we see that $\EE[I_p(p+U)] = O(p^2 \log (1/p))$
implies that $ \EE U = O(p^{3/2} \sqrt{\log(1/p)}).  $ Let $U' = U
\mathbf{1}_{B^c \x B^c}$ where $B^c = [0,1] \setminus B$. We have
  \begin{align}
    t(U) - t(U')
    &\leq 3 \int_{B \x [0,1] \x
      [0,1]} U(x,y)U(x,z)U(y,z) \, dxdydz
    \nonumber\\
    &\leq 3 \int_{B \x [0,1] \x
      [0,1]} U(y,z) \, dxdydz
    =3 \lambda(B) \EE U
    = O\paren{b^{-1}p^{7/2}\sqrt{\log(1/p)}} = o(p^3)\,.
    \label{eq-t(U)-t(U')}
  \end{align}
  Thus,
  \[
  t(U') \geq (\delta_1 - o(1)) p^3.
  \]
  By Lemma~\ref{lem:I_p-triangle},
  \begin{multline}\label{eq:I_p-U-K_3}
  \frac12 \int_{B^c \x B^c} I_p(p + U(x,y)) \, dxdy
  = \frac12 \EE[I_p(p + U')] \\
  \geq \bigg(\frac12 - o(1)\bigg) I_p(1) t(U')^{2/3}
  \geq \bigg(\frac{\delta_1^{2/3}}{2} - o(1)\bigg) p^2 I_p(1)\,.
\end{multline}
Combining \eqref{eq:I_p-U-K12} and \eqref{eq:I_p-U-K_3}, we deduce
that
\[
\frac{1}{2} \int_{[0,1]^2} I_p(p+U(x,y)) \, dxdy \geq \bigg(\frac{\delta_1^{2/3}}{2} +
  \delta_2 - o(1)\bigg) p^2  I_p(1)\,.
\]
This proves the lower bound on $\phi'(p,\delta_1, \delta_2)$.
\end{proof}

\subsection{Discrete variational problem --- proof of
  Theorem~\ref{thm:var}} \label{sec:discrete}

First consider the case $n^{-1/2} \ll p \ll 1$.
The upper bound on the left-hand side of~\eqref{eq:var-ans} was already proved in \S\ref{sec:cont-var}. For the lower bound, by applying Lemma~\ref{lem:var-relate} and then Theorem
  \ref{thm:var2} we have
  \[
    \lim_{n \to \infty}\frac{\phi(n, p, \delta)}{n^2 p^2 \log(1/p)}
    \geq \lim_{p \to 0} \frac{\phi(p,\delta)}{p^2\log(1/p)} -
    \lim_{n \to \infty} \frac{I_p(0)}{2n p^2\log(1/p)}
    = \min\bigg\{\frac{\delta^{2/3}}{2}, \frac{\delta}{3}\bigg\} - 0\,.
  \]
  The last zero is due to $I_p(0)/(n p^2\log(1/p)) \sim
  1/(n p\log(1/p)) \to 0$.
  This proves~\eqref{eq:var-ans}.

It remains to treat the regime $n^{-1}
\ll p \ll n^{-1/2}$. When $\delta \geq 27/8$, so
that $\delta^{2/3}/2 \leq \delta/3$, the desired result again follows from
Theorem~\ref{thm:var2} by the same argument as given above. However, when
$\delta < 27/8$, second upper bound construction (stated
immediately following Theorem~\ref{thm:var}) is invalid. In order to prove a matching lower bound for
\eqref{eq:var-ans-b}, we need to eliminate the second
construction as a possibility. We sketch the modifications
to the proof here. It suffices to show that
$s(U) = o(p^2)$ (using the notation of the previous
subsection). Indeed, once we know that $s(U) = o(p^2)$, the
decomposition \eqref{eq:K3-split} implies $t(U) = (\delta -
o(1))p^3$, from which we obtain $\tfrac12 \EE[I_p(p + U)] \geq
(\delta^{2/3}/2 - o(1))p^2I_p(1)$ by
Lemma~\ref{lem:I_p-triangle}.

From now on assume that $n^{-1} \ll p \ll n^{-1/2}$. Assume
$b$ is chosen so that
\[ \max\{p^2n,\sqrt{p \log(1/p)}\} \ll b
\ll 1\,.\]
Then \eqref{eq:B-upp-bd} gives $\lambda(B) =
O(p^2/b) \ll 1/n$. Since we are in the discrete setting of
Theorem~\ref{thm:var}, $\lambda(B) \ll 1/n$ implies that $B$ must be an empty set. Therefore, from
\eqref{eq:B-f-sq-upper-bd} we can infer that $s(U) = \int_{[0,1]} f(x)^2
\,dx = O(p^2b) = o(p^2)$, as claimed. This completes the
proof.
\qed

\subsection{Properties of the function $I_p$ as $p\to 0$} \label{sec:I_p}

Here we collect the various facts about $I_p$ that were referred to throughout the
proof of Theorem~\ref{thm:var2}.

\begin{lemma} \label{lem:I_p-asymp}
  Let $p \to 0$. If $0 \leq x \ll p$, then $I_p(p + x) \sim x^2/(2p)$. If $p
  \ll x \leq 1-p$, then $I_p(p+x) \sim x \log(x/p)$.
\end{lemma}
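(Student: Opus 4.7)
The plan is to expand
\[ I_p(p+x) = (p+x)\log\bigl(1 + \tfrac{x}{p}\bigr) + (1-p-x)\log\bigl(1 - \tfrac{x}{1-p}\bigr) \]
by direct Taylor series, treating the two regimes $x \ll p$ and $p \ll x$ separately. In each regime the leading-order behavior will drop out of one of the two summands, and the remaining contributions will be bookkept as lower-order corrections.

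In the first regime ($0 \le x \ll p$), the ratio $x/p \to 0$, and I would apply the identities $(1+u)\log(1+u) = u + u^2/2 + O(u^3)$ and $(1-v)\log(1-v) = -v + v^2/2 + O(v^3)$ with $u = x/p$ and $v = x/(1-p)$. The first summand becomes $x + x^2/(2p) + O(x^3/p^2)$, and the second becomes $-x + x^2/(2(1-p)) + O(x^3)$. The $\pm x$ terms cancel, leaving
\[ I_p(p+x) = \frac{x^2}{2p} + \frac{x^2}{2(1-p)} + O\!\bigl(\tfrac{x^3}{p^2}\bigr)\,, \]
whose leading order is $x^2/(2p)$, because $x^2/(2(1-p)) = O(x^2) = o(x^2/p)$ (since $p \to 0$) and $x^3/p^2 = o(x^2/p)$ (since $x/p \to 0$).

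In the second regime ($p \ll x \le 1-p$), the ratio $x/p \to \infty$, so $\log(1+x/p) = \log(x/p) + \log(1 + p/x) = \log(x/p) + O(p/x)$, which gives $(p+x)\log(1+x/p) = x\log(x/p) + p\log(x/p) + O(p)$. It then suffices to show the second summand together with the lower-order pieces is $o(x\log(x/p))$. For the second summand, $(1-y)\log(1-y)$ is bounded on $[0,1]$ and $-\log(1-p) = O(p)$, so $(1-p-x)\log((1-p-x)/(1-p)) = O(1)$ uniformly. When $x$ is bounded away from $0$, $x\log(x/p) \gtrsim \log(1/p) \to \infty$ absorbs every $O(1)$ error. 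When $x = o(1)$, a further Taylor expansion refines this summand to $-x + O(x^2 + p)$; combined with the $p\log(x/p)$ and $O(p)$ terms above, everything is $o(x\log(x/p))$ because $x \gg p$ and $\log(x/p) \to \infty$.

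The main point to check carefully is that the bound $O(1)$ on the second summand in the $p \ll x \le 1-p$ regime is actually negligible when $x\log(x/p)$ itself can tend to $0$ (e.g.\ for $x = p\log\log(1/p)$). This is handled by the $x \to 0$ sub-case above, where $|(1-p-x)\log((1-p-x)/(1-p))| = x + O(x^2 + p)$ and the ratio $x/(x\log(x/p)) = 1/\log(x/p) \to 0$ makes the term vanish relative to the main order. Otherwise the argument is routine manipulation of Taylor expansions, and no substantive obstacle arises.
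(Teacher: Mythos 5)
Your proposal is correct and follows essentially the same route as the paper: a direct Taylor expansion of both summands in $I_p(p+x)$, with the regimes $x\ll p$ and $p\ll x$ treated separately. The only real difference is in bounding the second summand $(1-p-x)\log\bigl(\frac{1-p-x}{1-p}\bigr)$ in the $p\ll x$ regime: the paper gets the needed $O(x)$ bound in one line from the inequality $\abs{\log y}\le y^{-1}-1$ for $y\in(0,1]$, while you reach the same $O(x)$ estimate via a case split ($x$ bounded away from $0$ vs.\ $x=o(1)$) — a slightly longer but equally valid bookkeeping, and you are right to flag that a mere $O(1)$ bound would not suffice when $x\log(x/p)\to 0$.
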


\begin{proof}
  We use Taylor expansion for $I_p(x)$ around $x = p$,
  noting that $I_p(p)=I_p'(p) = 0$, $I_p''(p) = 1/(p(1-p))$ and $I_p'''(x) = 1/(1-x)^2 - 1/x^2$. We have
  $I_p(p + x) = x^2 I_p''(p)/2 + x^3 I_p'''(\xi)/6$ for some
  $\xi \in (p,p+x)$; thus, $I_p(p+x) = x^2/(2p(1-p)) +
  O(x^3/p^2) \sim x^2/(2p)$ when $0 \leq x \ll p$.

  If $p \ll x < 1-p$ (the required statement trivially holds for $x=1-p$), then
  \begin{equation}
  I_p(p+x) = (p+x) \log \frac{p+x}{p} + (1-p-x) \log\frac{1-p-x}{1-p}
  = (1 + o(1)) x \log \frac{x}{p} + O(x)\,,
\end{equation}
where the bound $O(x)$ comes from $\abs{\log y} \leq y^{-1} - 1$ which is
valid for all $y \in (0,1]$. This shows that $I_p(p+x) \sim
x\log(x/p)$ when $p \ll x \leq 1-p$.
\end{proof}

\begin{lemma} \label{lem:I_p-sq-lower-bd-1}
  There exists $p_0 > 0$ so that for all $0 < p \leq p_0$ and
  $0 \leq x \leq b \leq 1 - p-1/\log (1/p)$,
  \begin{equation}\label{eq:I_p-sq-lower-bd}
  I_p(p + x) \geq (x/b)^2 I_p(p+b)\,.
\end{equation}
\end{lemma}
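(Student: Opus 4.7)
The plan is to prove the stronger claim that $\psi(x) := I_p(p+x)/x^2$ is non-increasing on the entire interval $[0, 1-2p]$; this will suffice, because $p \log(1/p) \leq 1/e < 1$ for all $p \in (0,1)$ (the maximum of $-p\log p$ being attained at $p = 1/e$), so $1/\log(1/p) > p$, and the hypothesis $b \leq 1-p-1/\log(1/p)$ then forces $b \leq 1-2p$. Once the monotonicity is known, $\psi(x) \geq \psi(b)$ for $0 \leq x \leq b$ is precisely the desired inequality $I_p(p+x) \geq (x/b)^2 I_p(p+b)$.

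Since $x^3 \psi'(x) = x\, I_p'(p+x) - 2 I_p(p+x) = \mu(p+x)$, where
\[
\mu(y) := (y-p)\, I_p'(y) - 2 I_p(y), \qquad y \in [p,1),
\]
the problem reduces to showing $\mu \leq 0$ on $[p, 1-p]$. Direct evaluation yields $\mu(p) = 0$; and using the identities $I_p(1-p) = (1-2p)\log\frac{1-p}{p}$ and $I_p'(1-p) = 2\log\frac{1-p}{p}$, one also finds $\mu(1-p) = 0$.

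To pin down the sign of $\mu$ in between, I would analyze its derivatives. One has $\mu'(y) = (y-p)I_p''(y) - I_p'(y)$ with $\mu'(p) = 0$, and $\mu''(y) = (y-p) I_p'''(y)$ whose sign equals that of $I_p'''(y) = 1/(1-y)^2 - 1/y^2$, namely strictly negative on $(p, 1/2)$ and strictly positive on $(1/2, 1-p)$. Hence $\mu'$ is strictly decreasing on $[p, 1/2]$ and strictly increasing on $[1/2, 1-p]$; combined with $\mu'(p) = 0$, this makes $\mu'$ strictly negative on $(p, 1/2]$. Now Rolle's theorem applied to $\mu$ on $[p,1-p]$ furnishes an interior zero of $\mu'$, which by the established monotonicity lies at a unique point $y^* \in (1/2, 1-p)$ with $\mu' < 0$ on $(p, y^*)$ and $\mu' > 0$ on $(y^*, 1-p)$. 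Therefore $\mu$ decreases from $0$ to a unique minimum at $y^*$ and rises back to $0$, giving $\mu \leq 0$ on $[p, 1-p]$, as required.

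The main step requiring care is the derivative analysis for $\mu$: the identification $\mu'' = (y-p) I_p'''$ together with the single sign change of $I_p'''$ at $y = 1/2$ is what gives $\mu'$ the shape ``down from $0$, back up through $0$ once,'' which in turn pins down $\mu$. The remainder is bookkeeping---choosing $p_0$ small enough that $1-p_0-1/\log(1/p_0) > 0$ (so the hypothesis on $b$ can be nonvacuous) and that the reduction $b \leq 1-2p$ is available, both automatic once $p$ is sufficiently small.
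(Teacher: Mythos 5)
Your proof is correct and is a neater variant of the paper's. The paper shows that $f(x) = I_p(p+\sqrt{x})$ is concave on $[0, x_p^2]$, where $x_p = 1-p-1/\log(1/p)$, and then invokes the chord inequality for concave functions vanishing at the origin. This requires computing $g(x) := 4x^3 f''(x^2)$, establishing its ``decrease-then-increase'' shape from the sign of $g'$, and estimating $g(x_p) = -\log\log(1/p) + o(1) < 0$ to conclude $g \leq 0$ on $[0,x_p]$. You instead prove directly that $\psi(x) := I_p(p+x)/x^2$ is non-increasing, via $\mu(y) = (y-p)I_p'(y) - 2I_p(y)$. The two analyses are closely linked --- in fact $g(x) = \mu'(p+x)$, so the ``decrease-then-increase'' shape you derive for $\mu'$ from $\mu'' = (y-p)I_p'''(y)$ is literally the same calculation --- but you replace the paper's asymptotic estimate of $g(x_p)$ with the clean algebraic identities $\mu(p) = \mu(1-p) = 0$. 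Combined with the shape of $\mu'$ (equal to $0$ at $p$, strictly decreasing on $[p,1/2]$, strictly increasing on $[1/2,1-p]$), this immediately forces $\mu \leq 0$ on $[p,1-p]$ via the single-minimum argument. The payoff is twofold: no asymptotic computation, and monotonicity of $\psi$ on the slightly larger interval $[0,1-2p]$. (Concavity of $f$, the paper's stronger pointwise property, actually fails on $[0,(1-2p)^2]$ since $\mu'$ changes sign strictly inside $(1/2,1-p)$; your weaker-but-sufficient property of $\psi$ is what allows the wider range.) The only hidden requirement is $p<1/2$ so that $1/2$ lies inside $(p,1-p)$, which is automatic once $p_0$ is small.
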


\begin{proof}
  Let $x_p = 1-p-1/\log(1/p)$. We will show that the
  function $f(x) = I_p(p + \sqrt{x})$ is concave for $x \in [0, x_p^2]$. The inequality \eqref{eq:I_p-sq-lower-bd} then
  follows because for each $b \leq x_p$, the chord joining $(0,0)$ and $(b^2,
  I_p(p+b))$ lies below $f$, so
  that $f(x)\geq (x/b^2) I_p(p+b)$ for all $0 \leq x
  \leq b^2$. Replacing $x$ by $x^2$ yields \eqref{eq:I_p-sq-lower-bd}.

  We have
  \[
  f''(x) = \frac{1}{4(1-p-\sqrt{x})(p+\sqrt{x})x} + \frac{1}{4x^{3/2}}
  \log\paren{\frac{(1-p-\sqrt{x})p}{(p + \sqrt{x})(1-p)}}\,.
  \]
  Let
  \[
  g(x) = 4x^3 f''(x^2) = \frac{x}{(1-p-x)(p+x)} + \log \paren{\frac{(1-p-x)p}{(p+x)(1-p)}}\,.
  \]
  It now suffices to show that $g(x) \leq
  0$ for $x \in [0,x_p]$, which implies that $f$ is concave in $[0,x_p^2]$. We have
  $g(0) = 0$ and
  \begin{align*}
    g(x_p) &= \log(1/p) \frac{1-p-1/\log(1/p)}{1 - 1/\log(1/p)} +
    \log\paren{\frac{p}{\log (1/p)(1-1/\log(1/p))(1-p)}} \\
    &\leq \log(1/p) - \log(1/p) - \log\log (1/p)  + O\left(1/\log(1/p)\right) = -\log\log(1/p) + o(1)\,.
  \end{align*}
  So, we can choose $p_0$ so that $g(x_p) \leq 0$ for all $p
  \leq p_0$. Furthermore, we have
  \[
  g'(x) = \frac{(-1 + 2p + 2x)x}{(1-p-x)^2(p+x)^2}\,.
  \]
  It follows that $g$ is decreasing when $x < 1/2 - p$ and increasing
  when $x > 1/2 - p$. Since $g(0),g(x_p) \leq 0$, we conclude that  $g(x) \leq 0$ for all $x \in [0,x_p]$.
\end{proof}

\begin{corollary}
  \label{cor:I_p-sq-lower-bd-2}
  There is some $p_0 > 0$ so that for all $0 < p \leq p_0$ and all $0
  \leq x \leq 1 - p$ one has
  \begin{equation} \label{eq:I_p-sq-lower-bd-2}
    I_p(p+x) \geq x^2 I_p(1 - 1/\log (1/p)) =(1+o(1)) x^2 I_p(1)
  \end{equation}
  where the $o(1)$-term goes to zero as $p \to 0$.
\end{corollary}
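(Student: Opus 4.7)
The plan is to derive this corollary as a direct consequence of Lemma~\ref{lem:I_p-sq-lower-bd-1}. I would set the free parameter $b$ to its largest permissible value $b_p := 1 - p - 1/\log(1/p)$, apply the lemma on the range $0 \leq x \leq b_p$, cover the thin residual interval $x \in (b_p, 1-p]$ separately by monotonicity of $I_p$ on $[p,1]$, and finally verify the asymptotic identity $I_p(p + b_p) = (1+o(1))\, I_p(1)$ as $p \to 0$.

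For the lower range, Lemma~\ref{lem:I_p-sq-lower-bd-1} gives $I_p(p+x) \geq (x/b_p)^2 I_p(p + b_p) \geq x^2 I_p(p + b_p)$, where the second step uses $b_p \leq 1$. For $b_p < x \leq 1-p$, since $I_p$ is increasing on $[p,1]$ and $x \leq 1-p < 1$, I would argue $I_p(p+x) \geq I_p(p+b_p) \geq x^2 I_p(p+b_p)$. Combining both cases yields $I_p(p+x) \geq x^2 I_p(1 - 1/\log(1/p))$ for every $x \in [0, 1-p]$, which is the first inequality in~\eqref{eq:I_p-sq-lower-bd-2}.

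For the asserted asymptotic equivalence, I would expand directly. Writing $\eta = 1/\log(1/p)$, the definition of $I_p$ gives
\[
I_p(1-\eta) = (1-\eta)\log\frac{1-\eta}{p} + \eta \log\frac{\eta}{1-p}\,.
\]
The dominant contribution is $(1-\eta)\log(1/p) = \log(1/p) - 1$, while the remaining pieces $(1-\eta)\log(1-\eta) + \eta\log\eta - \eta\log(1-p)$ are of order $O(\eta\log(1/\eta)) = O(\log\log(1/p)/\log(1/p)) = o(1)$. Since $I_p(1) = \log(1/p) \to \infty$, this gives $I_p(1-\eta) = (1+o(1))\, I_p(1)$, as claimed.

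I do not anticipate a genuine obstacle here: the corollary is essentially a reindexing of Lemma~\ref{lem:I_p-sq-lower-bd-1} so that the range of $x$ extends all the way to $1-p$ rather than stopping at $b_p$, combined with an elementary monotonicity patch on the sliver $(b_p, 1-p]$ and a routine asymptotic expansion to absorb the $1/\log(1/p)$ correction inside the $(1+o(1))$ factor.
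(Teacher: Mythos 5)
Your proof is correct and follows essentially the same route as the paper's: set $b = 1-p-1/\log(1/p)$, apply Lemma~\ref{lem:I_p-sq-lower-bd-1} for $x \leq b$, patch the sliver $(b, 1-p]$ via monotonicity of $I_p$ on $[p,1]$, and absorb the $1/\log(1/p)$ correction into the $(1+o(1))$ factor. The only cosmetic difference is that you expand $I_p(1-1/\log(1/p))$ directly, whereas the paper simply cites Lemma~\ref{lem:I_p-asymp} for that asymptotic.
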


\begin{proof}
  Let $b = 1-p-1/\log (1/p)$. When $0 \leq x \leq b$, the first inequality
  in \eqref{eq:I_p-sq-lower-bd-2} follows from
  Lemma~\ref{lem:I_p-sq-lower-bd-1} since $b < 1$, and when $b < x \leq 1-p$, it
  follows from $I_p(p+x) \geq I_p(p+b) \geq x^2 I_p(p + b)$ since
  $I_p(p+x)$ is increasing for $x \in [0,1-p]$. The last step in
  \eqref{eq:I_p-sq-lower-bd-2} follows from Lemma~\ref{lem:I_p-asymp}.
\end{proof}

\section{Extension to cliques}\label{sec:cliques}

In this section we extend Theorem~\ref{thm:var} and Corollary~\ref{cor:upper-tail}) to upper tails for clique counts.
\begin{definition*}
  [Discrete variational problem for upper tails of $H$-counts]
  Let $H$ be a graph on $k$ vertices. Recall that $\sG_n$ denotes the set of weighted undirected graphs on $n$ vertices with
edge weights in $[0,1]$. The corresponding variational problem for $\delta>0$ and $0<p<1$ is given by
\begin{equation} \label{eq:var-clq}
\phi_H(n,p,\delta) := \inf \Big\{ I_p(G) : G \in \sG_n \text{ with }
t(H,G) \geq (1 + \delta)p^{|E(H)|}\Big\}\,,
\end{equation}
where
\[
t(H,G) := n^{-k} \sum_{1 \leq x_1,\ldots,x_k \leq n} \,\prod_{i j \in E(H)} g_{x_i x_j}
\]
is the probability that a random map $V(H) \to V(G)$ is a graph homomorphism.
\end{definition*}
\begin{theorem}
  \label{thm:var-clq}
Let $K_k$ be the $k$-clique for a fixed $k\geq 3$, and let $\delta > 0$. Then
\[  \lim_{n\to\infty} \frac{\phi_{K_k}(n,p,\delta)}{n^2 p^{k-1}\log(1/p)} = \begin{cases}
    \min\left\{\frac12 \delta^{2/k}\;,\;\delta/k\right\} & \mbox{if }n^{-1/(k-1)} \ll p \ll 1\,,\\
    \frac12 \delta^{2/k}        & \mbox{if }n^{-2/(k-1)} \ll p \ll n^{-1/(k-1)}\,.
  \end{cases}
\]\end{theorem}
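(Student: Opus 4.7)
My plan is to adapt the blueprint of the triangle proof (Theorems~\ref{thm:var} and \ref{thm:var2}) to general $k$. First, I introduce the continuous analog
\[
\phi_{K_k}(p,\delta) := \inf\bigl\{\tfrac12\EE[I_p(W)] : W\in\cW,\ t(K_k,W)\geq (1+\delta)p^{\binom{k}{2}}\bigr\},
\]
for which the graphon-to-discrete comparison $\phi_{K_k}(p,\delta)\leq n^{-2}\phi_{K_k}(n,p,\delta) + (2n)^{-1}I_p(0)$ goes through verbatim as in Lemma~\ref{lem:var-relate}. Theorem~\ref{thm:var-clq} then reduces to proving $\lim_{p\to 0}\phi_{K_k}(p,\delta)/(p^{k-1}\log(1/p)) = \min\{\tfrac12\delta^{2/k}, \delta/k\}$, together with the discrete post-processing from \S\ref{sec:discrete} to recover both regimes in $n$.

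For the upper bound I use two graphon constructions: (a) a \emph{clique} of mass $a = (\delta^{1/k}+o(1))p^{(k-1)/2}$ ($W=1$ on $[0,a]^2$, $W=p$ elsewhere), giving $t(K_k,W) \geq a^k + O(p^{\binom{k}{2}}) \geq (1+\delta)p^{\binom{k}{2}}$ at entropy cost $\tfrac12 a^2 I_p(1) \sim \tfrac12\delta^{2/k}p^{k-1}\log(1/p)$; and (b) a \emph{hub} of mass $a = (\delta/k+o(1))p^{k-1}$ ($W=1$ when $\min(x,y)\leq a$, $W=p$ otherwise), whose dominant contribution $ka(1-a)^{k-1}p^{\binom{k-1}{2}} \sim ka\,p^{\binom{k-1}{2}}$ to $t(K_k,W)$ exceeds the required excess, at cost $a(1-a/2)I_p(1) \sim (\delta/k)p^{k-1}\log(1/p)$. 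The discrete versions require $an\gg 1$, i.e.\ $p\gg n^{-2/(k-1)}$ for the clique and $p\gg n^{-1/(k-1)}$ for the hub, which accounts for the two regimes of Theorem~\ref{thm:var-clq}.

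For the lower bound, write $W=U+p$ with $0 \leq U \leq 1-p$ and expand
\[
t(K_k,W) - p^{\binom{k}{2}} \;=\; \sum_{\emptyset\neq S\subseteq E(K_k)} p^{\binom{k}{2}-|S|}\,t(S,U),
\]
where $t(S,U)$ is the homomorphism density of the graph $S$ (isolated vertices contributing factor $1$). The two dominant terms are the clique $\delta_1 p^{\binom{k}{2}} := t(K_k,U)$ and the $k$ maximal-star contributions summing to $k\delta_2 p^{\binom{k}{2}}$, where $\delta_2 p^{k-1} := t(K_{1,k-1},U) = \int f(x)^{k-1}\,dx$ and $f(x) := \int U(x,y)\,dy$. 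Assuming the remaining subgraph contributions are $o(p^{\binom{k}{2}})$ in total (the step flagged below), this gives $\delta = \delta_1 + k\delta_2 + o(1)$, after which one needs the entropy lower bound
\[
\tfrac12\EE[I_p(p+U)] \;\geq\; \bigl(\tfrac12\delta_1^{2/k} + \delta_2 - o(1)\bigr)p^{k-1}I_p(1).
\]
This is obtained via the high/low-degree split $B := \{f > b\}$ with $p^{1/2}\log(1/p)\ll b\ll 1$ as in Lemma~\ref{lem:divide-conquer}: on $B^c\times B^c$ apply the clique density inequality $t(K_k,U) \leq (\EE U^2)^{k/2}$ (by iterated Cauchy--Schwarz generalising \eqref{eq:cauchy}, or equivalently by Finner's inequality applied to $K_k$ with fractional edge weights $1/(k-1)$, combined with $U^{k-1}\leq U^2$) together with Corollary~\ref{cor:I_p-sq-lower-bd-2} exactly as in Lemma~\ref{lem:I_p-triangle}; on $B\times[0,1]$ apply Jensen together with the pointwise bound $I_p(p+y) \geq (1-o(1))y^{k-1} I_p(1)$ for $y\in[0,1-p]$ (from $y^{k-1}\leq y^2$ and Corollary~\ref{cor:I_p-sq-lower-bd-2}), after first verifying $\int_{B^c}f^{k-1} = o(p^{k-1})$ via the same computation as \eqref{eq:B-f-sq-upper-bd} combined with $f\leq b$ on $B^c$. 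Minimising $\tfrac12\delta_1^{2/k}+\delta_2$ subject to $\delta_1 + k\delta_2 = \delta$ and $\delta_i\geq 0$ yields $\min\{\tfrac12\delta^{2/k}, \delta/k\}$: the objective is concave in $\delta_1$ for $k\geq 3$, so the minimum sits on the boundary $\delta_1\in\{0,\delta\}$ (the crossover is at $\delta = (k/2)^{k/(k-2)}$). The passage to the discrete problem, including the regime $n^{-2/(k-1)}\ll p\ll n^{-1/(k-1)}$, follows \S\ref{sec:discrete}: choose $b$ so that $\lambda(B) = O(p^{k-1}/b)\ll 1/n$, forcing $B=\emptyset$ in the discrete setting and hence $\delta_2 = o(1)$, leaving only the clique contribution.

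The main obstacle is the uniform handling of the ``error'' subgraphs: one must show $p^{\binom{k}{2}-|S|}t(S,U) = o(p^{\binom{k}{2}})$ for every $S\subseteq E(K_k)$ that is neither the full edge set nor a maximal star. For $k=3$ the only such term is the single edge, handled by $\EE U = o(p)$ (obtained from the convexity inequality $\EE[I_p(W)] \geq I_p(p+\EE U)$ together with the entropy hypothesis, which in fact yields $\EE U = O(p^{k/2}\sqrt{\log(1/p)}) = o(p)$ for all $k\geq 3$). For general $k$ there are $2^{\binom{k}{2}}$ subsets to control. The key observation is that every such $S$ has a vertex $v$ of $S$-degree $d$ with $1 \leq d \leq k-2$; integrating out the coordinate of $v$ and applying H\"older (using $U\leq 1$ to bound $U^d\leq U$) yields
\[
\int \prod_{j:\,vj\in S} U(x_v, x_j)\,dx_v \;\leq\; \prod_{j:\,vj\in S} f(x_j)^{1/d},
\]
and iterating this reduction expresses $t(S,U)$ as a product of moments $\int f^r$ (for $r\geq 2$) and $\EE U$, each controllable by the entropy hypothesis via Corollary~\ref{cor:I_p-sq-lower-bd-2} (which gives $\EE U^2 = O(p^{k-1})$ and hence $\int f^r \leq \int f^2 \leq \EE U^2 = O(p^{k-1})$). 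The exponent accounting is tedious but routine, and ultimately confirms that only the clique and maximal-star terms contribute to leading order.
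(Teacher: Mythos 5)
Your overall blueprint matches the paper: introduce the continuous analogue $\phi_{K_k}(p,\delta)$, give clique and hub constructions for the upper bound, expand $t(K_k,p+U)-p^{\binom{k}{2}}$ over subgraphs of $K_k$, isolate the $K_k$ and $K_{1,k-1}$ contributions, and run the high/low-degree split. The constructions, the crossover value $\delta=(k/2)^{k/(k-2)}$, the boundary minimisation of $\tfrac12\delta_1^{2/k}+\delta_2$, and the discrete post-processing for the two $n$-regimes are all correct. The genuine problem lies in the step you flag as ``tedious but routine'': the control of the $2^{\binom{k}{2}}-O(k)$ error subgraphs.

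Your vertex-elimination scheme --- pick $v$ of $S$-degree $d\in\{1,\ldots,k-2\}$, integrate out $x_v$ by H\"older, and pass from $\int U^d\,dx_v$ to $f(x_j)=\int U\,dx_v$ via $U^d\leq U$ --- is too lossy to close. A concrete failure: take $k=4$ and $S=C_4$. Each vertex has degree $2\leq k-2$; eliminating vertex $1$ gives $f(x_2)^{1/2}f(x_4)^{1/2}$, then eliminating vertex $3$ gives another $f(x_2)^{1/2}f(x_4)^{1/2}$, so your iteration yields $t(C_4,U)\leq(\EE U)^2$. But the entropy hypothesis only gives $\EE U=O(p^{k/2}\sqrt{\log(1/p)})$, hence $(\EE U)^2=O(p^4\log(1/p))$, which is \emph{not} $o(p^4)=o(p^{e(C_4)})$: the bound is off by exactly a $\log(1/p)$ factor, and the same loss appears for every cycle $C_\ell\subseteq K_k$ with $\ell<k$. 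The downgrade $U^d\leq U$ replaces second moments $\EE U^2\lesssim p^{k-1}$ by first moments $\EE U\lesssim p^{k/2}\sqrt{\log(1/p)}$, and this trade is strictly worse precisely when it matters.

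What the paper does instead is keep $\EE U^2$ throughout. Claim~\ref{clm:subgraph} --- a nontrivial combinatorial lemma proved via a longest-cycle argument --- produces a spanning subgraph $H'\subseteq H$ with $\Delta(H')\leq 2$ and $e(H')>2e(H)/(k-1)$; then Finner's inequality \eqref{eq-gen-holder} with $d=2$ gives $t(H,U)\leq t(H',U)\leq(\EE U^2)^{e(H')/2}\lesssim p^{(k-1)e(H')/2}\ll p^{e(H)}$. The observation that $S$ has a vertex of degree in $\{1,\ldots,k-2\}$ is far weaker than Claim~\ref{clm:subgraph} and does not by itself give the edge-count lower bound $e(H')>2e(H)/(k-1)$ that makes the exponent accounting work; you would need to prove something of that strength, and then you would in effect be reproving the paper's lemma. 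A secondary, more minor point you gloss over: removing the high-degree set $B$ in the $K_k$ case requires bounding $t(K_k,U)-t(K_k,U')\leq k\lambda(B)\,t(K_{k-1},U)$, and the paper controls $t(K_{k-1},U)$ by Finner on $C_{k-1}$; your sketch doesn't address how the analog of \eqref{eq-t(U)-t(U')} is made small, though the same $\EE U^2$ trick handles it with the right choice of $b$.
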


Given Theorem~\ref{thm:var-clq}, the analogue of Corollary~\ref{cor:upper-tail} again follows from the new framework of Chatterjee and Dembo,
which establishes (see~\cite[Theorem~1.2]{CD}) that for any fixed $k\geq 3$, the rate function of upper tails for $K_k$ counts in $\cG(n,p)$ is $(1+o(1))\phi_{K_k}(n,p,\delta)$ provided that $p\geq n^{-\alpha}$ for some $\alpha=\alpha(k)>0$ (in particular, any fixed $0<\alpha < (4k^3-8k^2+k+3)^{-1}$ suffices).

\begin{corollary}\label{cor:upper-tail-clq}
For any fixed $k\geq 3$ there exists some $\alpha=\alpha(k)>0$ so the following holds.
For any fixed $\delta>0$, if $n^{-\alpha} \leq p \ll 1$ then
  \begin{equation*}
    \P\left( t(K_k,\cG_{n,p}) \geq (1 +
    \delta)p^{\binom{k}2}\right) = \exp\left[ - (1-o(1))\min\left\{\tfrac12 \delta^{2/k}\,,\, \delta/k\right\}n^2 p^{k-1} \log(1/p)\right]\,.
  \end{equation*}
\end{corollary}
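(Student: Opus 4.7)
The plan is to combine the variational estimate just established in Theorem~\ref{thm:var-clq} with the large deviation framework of Chatterjee and Dembo~\cite[Theorem~1.2]{CD} specialised to $H = K_k$; the derivation will parallel the one giving Corollary~\ref{cor:upper-tail} from Theorem~\ref{thm:var}, so the work really consists of matching the two sparsity ranges.

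First I would invoke~\cite[Theorem~1.2]{CD}, which for $H = K_k$ asserts the existence of some $\alpha'(k) > 0$ (in fact, any fixed $\alpha' < (4k^3 - 8k^2 + k + 3)^{-1}$ suffices) such that, whenever $n^{-\alpha'(k)} \leq p \ll 1$,
\[
\log \P\bigl( t(K_k, \cG_{n,p}) \geq (1+\delta)\, p^{\binom{k}{2}} \bigr) = -\bigl(1+o(1)\bigr)\,\phi_{K_k}(n,p,\delta)\,.
\]
Next I would choose $\alpha(k)$ to be any positive constant strictly smaller than $\min\{\alpha'(k),\, 1/(k-1)\}$, which guarantees that the hypothesis $p \geq n^{-\alpha(k)}$ simultaneously yields the above representation and places us in the regime $p \gg n^{-1/(k-1)}$ of Theorem~\ref{thm:var-clq}. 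That theorem then gives
\[
\phi_{K_k}(n,p,\delta) = \bigl(1+o(1)\bigr)\min\bigl\{\tfrac{1}{2}\delta^{2/k},\; \delta/k\bigr\}\, n^2 p^{k-1}\log(1/p)\,,
\]
and substituting this into the previous display produces the asymptotic stated in the corollary.

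The only non-mechanical point is the bookkeeping for the two sparsity ranges described above: one needs $p \geq n^{-\alpha'(k)}$ for \cite[Theorem~1.2]{CD} and $p \gg n^{-1/(k-1)}$ for the first regime of Theorem~\ref{thm:var-clq}, and selecting $\alpha(k)$ strictly below both thresholds reconciles them. Beyond this there is no genuine obstacle, as both ingredients have already been put in place.
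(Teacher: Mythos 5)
Your proposal is correct and matches the paper's own argument: both invoke \cite[Theorem~1.2]{CD} for $K_k$ (valid for $p \geq n^{-\alpha'}$ with $\alpha' < (4k^3-8k^2+k+3)^{-1}$) and substitute the asymptotics from Theorem~\ref{thm:var-clq}. The only minor simplification you could note is that $\alpha'(k) < 1/(k-1)$ automatically for all $k \geq 3$, so taking $\alpha(k) = \alpha'(k)$ already places you in the first regime of Theorem~\ref{thm:var-clq} without needing the explicit minimum.
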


\subsection{Proof of Theorem~\ref{thm:var-clq}}
Let $K_{1,\ell-1}$ be the star on $\ell$ vertices, and let $e(H)$ and $\Delta(H)$ denote the number of edges and maximum degree in $H$, resp.
The proof will follow from the same arguments used to prove Theorem~\ref{thm:var}, once we establish the next lemma.

\begin{lemma}
  \label{lem:K_k-small}
Fix $k\geq 4$ and let $H$ be a non-edgeless $k$-vertex graph other than $K_k$ and $K_{1,k-1}$.
If $U\in\cW$ is a graphon
with $0 \leq U \leq 1-p$ and
$I_p(p+U) \lesssim p^{k-1}\log(1/p)$, then $t(H, U) \ll p^{e(H)}$.
\end{lemma}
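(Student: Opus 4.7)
The plan is to combine moment bounds extracted from the entropy hypothesis with a Hölder ``peeling'' inequality, and then conclude by a short case analysis on the structure of $H$.

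First I extract from $\EE[I_p(p+U)] \lesssim p^{k-1}\log(1/p)$ the two moment bounds $\int U^2 \lesssim p^{k-1}$ and $\EE[U] \lesssim p^{k-1}$: the first via the pointwise estimate $I_p(p+x) \ge (1-o(1))\,x^2\log(1/p)$ from Corollary~\ref{cor:I_p-sq-lower-bd-2}, and the second via Jensen's inequality applied to the convex function $I_p$ together with Lemma~\ref{lem:I_p-asymp} (which gives $I_p(p+x)\sim x\log(x/p)$ for $x\gg p$). The standard matching bound follows at once: if $M$ is a matching in $H$ of size $\mu$, then $t(H,U) \le (\EE U)^{\mu}\lesssim p^{\mu(k-1)}$, since $U\le 1$ on non-matching edges and the matching edges integrate independently.

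Next I establish the peeling inequality: for any graph $H'$ and any vertex $v\in V(H')$ of degree $d\ge 1$,
\[
t(H', U) \;\le\; \EE[U] \cdot t(H' - v, U)^{(d-1)/d}.
\]
To prove this, set $f(x):=\int U(x,y)\,dy$ and integrate $x_v$ first; Hölder with exponent $d$ combined with $U^d\le U$ (valid since $U\le 1$) gives $\int\prod_{u\in N(v)}U(x_v,x_u)\,dx_v \le \prod_u f(x_u)^{1/d}$. Then apply Hölder on the remaining integral with exponents $(d,\,d/(d-1))$: the factor $\prod_u f(x_u)^{1/d}$ integrates to the $d$th power of $(\EE U)^d$ (contributing $\EE U$), while $\prod_{uw\in E(H'-v)}U^{d/(d-1)}\le\prod U$ integrates to at most $t(H'-v,U)$ (contributing $t(H'-v,U)^{(d-1)/d}$).

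Now the case analysis. If $\mu(H)=1$, a short combinatorial argument on intersecting families of edges shows $H$ is either a star $K_{1,r}$ (possibly with isolated vertices) or a triangle $K_3$ with isolated vertices. In the first case, $H\ne K_{1,k-1}$ forces $r\le k-2$, hence $t(H,U)=\int f^r\,dx\le\EE U\lesssim p^{k-1}\ll p^r=p^{e(H)}$; in the second case, $t(K_3,U)\le\|U\|_2^3\lesssim p^{3(k-1)/2}\ll p^3$ since $3(k-1)/2>3$ for $k\ge 4$. If $\mu(H)\ge 2$ and $e(H)\le 2k-3$, the matching bound already yields $t(H,U)\lesssim p^{2(k-1)}\ll p^{e(H)}$. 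Finally, if $e(H)\ge 2k-2$, iterate the peeling inequality, selecting at each step a vertex of maximum degree (in practice a universal vertex, since $H$ is dense); the recursion is terminated at a sub-graph where the matching bound is sharp. The assumption $H\ne K_k$ ensures that at some stage the sub-graph in the recursion is strictly contained in a clique, so the matching bound at that stage has strictly positive slack, which propagates back through the telescoping product of exponents. For instance, in the extremal case $H=K_k-e$, peeling universal vertices down to $K_4-e$ (where $\mu=2$ yields $t(K_4-e,U)\lesssim p^{2(k-1)}$) produces the accumulated exponent $k(k-1)/2=e(H)+1$, i.e., slack exactly $1$.

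The main obstacle is the dense regime $e(H)\ge 2k-2$: one must unroll the peeling recursion and verify that the accumulated exponent strictly exceeds $e(H)$ for every non-complete, non-star $H$. A clean way to organize this is by induction on $k$ together with a direct bookkeeping at the base of the recursion; the hypothesis $H\ne K_k$ is precisely what guarantees a non-trivial non-edge at the terminal sub-graph, and the hypothesis $H\ne K_{1,k-1}$ rules out the opposite extreme $\mu(H)=1$ in which the recursion would collapse.
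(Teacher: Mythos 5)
The proposal diverges from the paper's argument (which combines Finner's generalized H\"older inequality $t(F,U)\le \EE[U^2]^{e(F)/2}$ for $\Delta(F)\le 2$ with Claim~\ref{clm:subgraph}, producing a spanning subgraph $H'$ of max degree $\le 2$ and $e(H') > 2e(H)/(k-1)$), and it contains a genuine error at the very first step.

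The claimed bound $\EE U \lesssim p^{k-1}$ does not follow from $\EE[I_p(p+U)]\lesssim p^{k-1}\log(1/p)$, and is in fact false. From $\EE U^2 \lesssim p^{k-1}$ one gets $\EE U \lesssim p^{(k-1)/2}\ll p$ (Cauchy--Schwarz), so any graphon $U$ satisfying the hypothesis has $\EE U \ll p$, and the asymptotic $I_p(p+x)\sim x\log(x/p)$ (valid only for $x\gg p$) that you invoke never applies. In the relevant regime $x\ll p$ one has $I_p(p+x)\sim x^2/(2p)$ (Lemma~\ref{lem:I_p-asymp}), so Jensen only yields $(\EE U)^2/(2p)\lesssim p^{k-1}\log(1/p)$, i.e., $\EE U\lesssim p^{k/2}\sqrt{\log(1/p)}$. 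A concrete counterexample to your bound: the constant graphon $U\equiv p^{k/2}$ satisfies $\EE[I_p(p+U)]\asymp p^{k-1}\ll p^{k-1}\log(1/p)$, yet $\EE U = p^{k/2}\gg p^{k-1}$ for all $k\ge 3$.

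This false bound propagates through the rest of the argument. The matching bound then gives only $t(H,U)\lesssim p^{\mu k/2}(\log(1/p))^{\mu/2}$, which for $\mu=2$ is $\lesssim p^k\log(1/p)$ and is $\ll p^{e(H)}$ only when $e(H)<k$, not $e(H)\le 2k-3$ as you claim. Similarly, in the star case $H=K_{1,r}$ your estimate $\int f^r\le \EE U\lesssim p^{k-1}$ breaks down; the correct route there is $\int f^r\le\int f^2\le\EE U^2\lesssim p^{k-1}$, which is an $\EE U^2$-based bound, not an $\EE U$-based one. Finally, the ``dense'' case $e(H)\ge 2k-2$ is handled only by an informal description of a peeling recursion whose telescoping exponent is asserted but not verified; given that the base-case moment input is wrong, this part cannot be checked as written. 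The paper's route --- bounding $\EE U^2$ alone, applying Finner's inequality to a subgraph of maximum degree $2$, and proving the purely combinatorial Claim~\ref{clm:subgraph} that such a spanning subgraph with $e(H') > 2e(H)/(k-1)$ exists whenever $H\ne K_k, K_{1,k-1}$ --- avoids needing any bound on $\EE U$ at all and is both simpler and correct.
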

Towards the proof of this lemma, we need the following simple claim.
\begin{claim}
  \label{clm:subgraph}
Let $H=(V,E)$ be a nonempty graph on $k\geq 4$ vertices other than $K_k$ and $K_{1,k-1}$.
  Then $H$ has a spanning subgraph $H'=(V,E')$ with $\Delta(H')\leq  2$ and $ e(H') > 2 e(H)/(k-1)  $.
\end{claim}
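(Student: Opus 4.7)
The plan is to use a random Hamilton cycle of $K_k$ as the main tool. For a uniformly random such cycle $C$, each edge of $K_k$ lies in $C$ with probability $2/(k-1)$, so $\EE|E(C)\cap E(H)|=2e(H)/(k-1)$. Whenever some Hamilton cycle $C$ satisfies $|E(C)\cap E(H)|>2e(H)/(k-1)$, we are done: $H':=(V,E(C)\cap E(H))$ is a spanning subgraph of $C$, hence has $\Delta(H')\leq 2$, with strictly more than $2e(H)/(k-1)$ edges.

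The difficulty is the degenerate case where $|E(C)\cap E(H)|$ is constant in $C$ across all Hamilton cycles of $K_k$. I would classify such $H$ by exploiting adjacent swaps: comparing $v_1 v_2\cdots v_k v_1$ and $v_1\cdots v_{i-1}v_{i+1}v_i v_{i+2}\cdots v_k v_1$, invariance of the intersection with $E(H)$ forces, upon writing $x_{ij}:=\one[\{i,j\}\in E(H)]$,
\[
x_{ab}+x_{cd}=x_{ac}+x_{bd}\qquad \text{for all distinct }a,b,c,d\in V,
\]
since any four distinct vertices can be placed consecutively in some Hamilton cycle when $k\geq 4$. A short computation shows that the real-valued solutions to this four-term relation are precisely $x_{ij}=h(i)+h(j)+c$ for some $h\colon V\to\RR$ and $c\in\RR$; the $\{0,1\}$-constraint then forces $h$ to attain at most two distinct values, with one of them realized by at most a single vertex. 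This yields the classification of $H$ as one of $\emptyset$, $K_k$, $K_{1,k-1}$, or the disjoint union of $K_{k-1}$ with a single isolated vertex.

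The first three cases are excluded by hypothesis, leaving the graph consisting of $K_{k-1}$ together with an isolated vertex. Here $e(H)=\binom{k-1}{2}$, so $2e(H)/(k-1)=k-2$; taking $H'$ to be any Hamilton cycle of the $K_{k-1}$ component (which exists since $k-1\geq 3$) yields $e(H')=k-1>k-2$ with $\Delta(H')=2$, as required. The main obstacle is the classification step: deriving the four-term relation is clean, but one must carefully exploit the $\{0,1\}$-integrality to finish the enumeration and then notice that the sole non-trivial constant-intersection case is handled by a short cycle on $k-1$ vertices.
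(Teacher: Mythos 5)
Your argument is correct and genuinely different from the paper's. The paper proceeds by taking a minimal counterexample and a longest cycle $C$ in $H$: it bounds $|\partial C|$ (the edges of $H$ meeting $C$) by $\ell(k-1)/2$ via a case analysis, and either uses $C$ directly as $H'$ or peels off $C$ and recurses on $H\setminus\partial C$. Your proof instead averages $|E(C)\cap E(H)|$ over random Hamilton cycles of $K_k$, so that the generic case is dispatched in one line, and then classifies the degenerate (constant-intersection) graphs by the four-term identity $x_{ab}+x_{cd}=x_{ac}+x_{bd}$, whose real solutions are $x_{ij}=h(i)+h(j)$; the $\{0,1\}$-integrality then pins $H$ down to $\emptyset$, $K_k$, $K_{1,k-1}$, or $K_{k-1}\sqcup K_1$, and the last one is handled by a $(k-1)$-cycle. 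Your route is more systematic and illuminating: it explains exactly why $K_k$ and $K_{1,k-1}$ must be excluded (they achieve equality, not strict inequality, for every $\Delta\le 2$ spanning subgraph), and it locates the one further near-degenerate case $K_{k-1}\sqcup K_1$ that only fails to witness strict inequality via Hamilton cycles. The paper's approach is more hands-on and avoids the algebraic classification step, which you do leave partly as "a short computation" — this is routine (it is the standard cocycle argument, e.g., verified directly on three reference vertices), but for full rigor you would want to spell out the derivation of $x_{ij}=h(i)+h(j)$ and the fact that $h$ can take at most two values, at most one of which is attained more than once.
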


\begin{proof}
First, we may assume that $\Delta(H) > 2$, since if $\Delta(H)\leq 2$ then $H'=H$ suffices (as $e(H) > 2e(H)/(k-1)$ for $k\geq 4$).
Second, if $H$ is acyclic then $2e(H)/(k-1) \leq 2$, so one can form $H'$ via 2 edges incident to a vertex (recall $\Delta > 2$), along with
another edge if needed (either disjoint or extending that path, recalling $H\neq K_{1,k-1}$). Thus, if we suppose $H$ is a counterexample to the claim with a minimum number of edges, then $H$ must contains a cycle.

Let $C=(v_0,\ldots,v_{\ell-1})$ be a longest cycle of $H$ (so that $v_i v_{i+1}\in E$, indices taken modulo $\ell$).
  Then $\ell < k$, otherwise we could take $E(H')=E(C)$, since $k > 2 e(H) / (k-1)$ for $H \neq K_k$.

  Denote by $\partial C$ the set of edges in $H$ with at least
  one endpoint in $C_\ell$. We claim that $\abs{\partial C} < \ell(k-1)/2$. Indeed, for any $i$, the vertices $v_i$ and
  $v_{i+1}$ cannot have any common
  neighbors outside $C$ (as otherwise a longer cycle can be formed). Hence, every $u\notin C$ can be connected to at most $\lfloor \ell/2\rfloor $ vertices in $C$,
  and unless all $\binom{\ell}2$ potential edges between the vertices of $C$ are present, $\abs{\partial C} < (k-\ell)\lfloor\ell/2\rfloor + \binom{\ell}2 \leq \ell(k-1)/2$.
  On the other hand, if these $\binom{\ell}2$ edges all belong to $H$, then every $u\notin C$ can be connected to at most one vertex in $C$ (otherwise a longer cycle exists), whence $\abs{\partial C} \leq k-\ell + \binom{l}2 < \ell(k-1)/2$ (the last inequality used $2<\ell<k$).

It follows that $e(H) > |\partial C|$, or else $2e(H) / (k-1) < \ell$ and again we can take $E(H')=E(C)$.
Finally, let $H_1 = (V, E(H)\setminus \partial C)$. As established above, $e(H_1) > e(H) - \ell(k-1)/2$, so it would suffice to find a subgraph $H'_1$ of it with $\Delta(H'_1)\leq 2$ and $e(H_1') \geq 2e(H_1)/(k-1)$, to which we can add the cycle $C$ as a separate connected component. Indeed such a subgraph $H'_1$ exists, since $0<e(H_1)<e(H)$ and $H$ was assumed to be a counterexample minimizing $e(H)$.
\end{proof}

\begin{proof}
  [\emph{\textbf{Proof of Lemma~\ref{lem:K_k-small}}}]
  By Corollary~\ref{cor:I_p-sq-lower-bd-2} (as used in the first step in the proof of Lemma~\ref{lem:I_p-triangle}),
$
  \EE [U^2] \leq (1+o(1)) \EE[I_p(p+U)]/I_p(1) \lesssim p^{k-1}
$.
Next, as a consequence of the generalized H\"older's inequality~\cite{Fin92} (see~\cite[Corollary 3.2]{LZ}),
\begin{equation}
  \label{eq-gen-holder}
 t(F,U) \leq \EE \big[U^{d}\big]^{e(F)/d}
\qquad\mbox{for any graph $F$ with $\Delta(F)\leq d$}\,.
\end{equation}
So, by combining these inequalities,
  $t(H',U) \lesssim p^{(k-1)e(H')/2}$ holds for any $H'$ with $\Delta(H')\leq 2$.
  Taking $H'$ as provided by Claim~\ref{clm:subgraph}, we find that
 $t(H,U) \leq t(H',U) \ll p^{e(H)}$, as desired.
\end{proof}

The upper bound of Theorem~\ref{thm:var-clq} on $\phi_{K_k}$ is obtained via the same constructions of~\S\ref{sec:cont-var}, with modified part sizes: a copy of $K_r$ for $r=\delta^{1/k} n p^{(k-1)/2}$ or a copy of $K_{r,n-r}$ for $r=(\delta/k) n p^{k-1}$.
For the lower bound, one decomposes $t(K_k,W)$ as in~\eqref{eq:K3-split}, in which, by Lemma~\ref{lem:K_k-small}, all terms other than $t(K_k,U)$ and $t(K_{1,k-1},U)$ are negligible. The remaining terms, resp.\ analogous to $t(U)$ and $s(U)$ in \S\ref{sec:solve-var}, are treated as in~\S\ref{sec:solve-var} (e.g.,  $\lambda(B)\lesssim p^{k-1}/b $ replaces $\lambda(B)\lesssim p^2/b$ in Lemma~\ref{lem:divide-conquer}) with one exception: instead of~\eqref{eq-t(U)-t(U')}, write
$t(K_k,U)-t(K_k,U') \leq k \lambda(B) t(K_{k-1},U)$; we wish this quantity to be $o\big(p^{\binom{k}2}\big)$, and indeed, since $t(K_{k-1},U) \leq t(H',U) \lesssim p^{(k-1)e(H)/2}$ for any $H'\subset K_{k-1}$ with $\Delta(H')\leq 2$ (as in the proof of Lemma~\ref{lem:K_k-small}), letting $H'=C_{k-1}$ (recall that $k\geq 4$) yields $t(K_{k-1},U) \lesssim p^{(k-1)^2/2}$, and using $\lambda(B) \lesssim p^{k-1}/b$ with $b \gg p^{(k-1)/2}$ completes the proof.
\qed

\subsection{General subgraph counts}
 It is worthwhile noting that the analysis of cliques from the previous section readily implies that, for any fixed graph $F$ with maximum degree $\Delta$,
\begin{equation}
  \label{eq-phiF-order}
  \phi_F(n,p,\delta) \asymp n^2 p^\Delta \log(1/p) \qquad\mbox{whenever } p \gg n^{-1/\Delta}\,.
\end{equation}
Consequently (again via~\cite{CD}), there is some $\alpha=\alpha(F)>0$ such that the rate function $R(n,p,\delta)$ for observing
a number of $F$-copies that is $(1+\delta)$ times its mean in $\cG_{n,p}$ for $p\geq n^{-\alpha}$ is of order $n^2 p^\Delta\log(1/p)$ (the best previous bounds here, cf.~\cite{JOR04}, were $n^2 p^\Delta \lesssim R(n,p,\delta) \lesssim n^2 p^\Delta \log(1/p)$).

\begin{corollary}\label{cor:gen-H}
Let $F$ be a fixed graph with maximum degree $\Delta$. There exist $\alpha=\alpha(F)>0$ such that, for any fixed $\delta>0$ and
any $p\geq n^{-\alpha} $,
  \begin{equation*}
    -\log \P\big( t(F,\cG_{n,p}) \geq (1 +
    \delta)p^{e(F)}\big) \asymp n^2 p^\Delta \log(1/p)\,.
  \end{equation*}
\end{corollary}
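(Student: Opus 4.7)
The plan is to deduce the corollary from~\cite{CD} by establishing the two-sided bound~\eqref{eq-phiF-order}, namely $\phi_F(n,p,\delta)\asymp n^2 p^\Delta\log(1/p)$ whenever $p\gg n^{-1/\Delta}$. Granted~\eqref{eq-phiF-order}, the Chatterjee--Dembo identity---stating that the rate function equals $(1+o(1))\phi_F(n,p,\delta)$ for $p\geq n^{-\alpha(F)}$---yields the claimed order of $-\log\P$ on any range $p\geq n^{-\alpha}$ with $\alpha\leq\min(\alpha(F),1/\Delta)$.

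For the upper bound on $\phi_F$, my plan is to adapt the ``hub'' construction from~\S\ref{sec:cont-var}: pick a vertex $v^*\in V(F)$ with $\deg(v^*)=\Delta$, set $r=\lceil c\delta\, np^{\Delta}\rceil$ for a suitable constant $c=c(F)>0$, and take $g_{ij}=1$ whenever $\min(i,j)\leq r$ and $g_{ij}=p$ otherwise. Mapping $v^*$ to a hub vertex contributes $\Theta((r/n)\,p^{e(F)-\Delta})$ to $t(F,G)$, which exceeds $\delta p^{e(F)}$ by the choice of $r$, while $I_p(G)\sim r(n-\tfrac{r+1}{2})\log(1/p)\asymp n^2 p^\Delta\log(1/p)$. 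The regime $p\gg n^{-1/\Delta}$ is precisely what makes $r\gtrsim np^\Delta$ a meaningful choice.

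For the matching lower bound on $\phi_F$, by Lemma~\ref{lem:var-relate} (the correction term $(2n)^{-1}I_p(0)\asymp p/n$ is negligible against $p^\Delta\log(1/p)$ when $\Delta\geq 2$ and $p\gg n^{-1/\Delta}$), it suffices to prove the graphon version $\phi_F(p,\delta)\gtrsim p^\Delta\log(1/p)$. Arguing by contradiction, assume some $W\in\cW$ with $W\geq p$ (without loss of generality, by monotonicity of $I_p$ off $p$ and of $t(F,\cdot)$) satisfies $t(F,W)\geq (1+\delta)p^{e(F)}$ and $\EE[I_p(W)]\leq c\,p^\Delta\log(1/p)$ for a constant $c=c(F,\delta)$ to be chosen small. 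Setting $U=W-p\in[0,1-p]$, Corollary~\ref{cor:I_p-sq-lower-bd-2} gives $\EE[U^2]\leq (1+o(1))c\,p^\Delta$, and since $U\leq 1$ and $\Delta\geq 2$ we also have $\EE[U^\Delta]\leq \EE[U^2]\leq (1+o(1))c\,p^\Delta$. Expanding
\[
t(F,W)-p^{e(F)}\;=\;\sum_{\emptyset\neq S\subseteq E(F)} p^{e(F)-|S|}\,t(F_S,U),
\]
with $F_S$ the spanning subgraph of $F$ with edge set $S$ (so $\Delta(F_S)\leq\Delta$), and applying the generalized H\"older inequality~\eqref{eq-gen-holder} with $d=\Delta$ bounds each summand by $p^{e(F)-|S|}\,\EE[U^\Delta]^{|S|/\Delta}\leq (1+o(1))c^{|S|/\Delta}p^{e(F)}$. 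Summing over the $\leq 2^{e(F)}$ non-empty $S$ produces $t(F,W)-p^{e(F)}\leq (1+o(1))\,2^{e(F)}c^{1/\Delta}p^{e(F)}$, which is below $\delta p^{e(F)}$ once $c$ is sufficiently small---contradicting the hypothesis.

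The main technical point is the reduction $\EE[U^2]\lesssim p^\Delta$ extracted from the entropy via Corollary~\ref{cor:I_p-sq-lower-bd-2}, together with the uniform H\"older bound $t(F_S,U)\lesssim p^{|S|}$ applied to every non-empty spanning subgraph of $F$. Since only the \emph{order} of $\phi_F$ is sought, there is no need to resolve the clique-versus-hub dichotomy that appeared in Theorem~\ref{thm:var-clq}; being content to lose constants (in $F$ and $\delta$) obviates any fine structural analysis of the optimizer.
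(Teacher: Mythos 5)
Your proposal is correct and takes essentially the same approach as the paper: the hub ($K_{r,n-r}$) construction gives the upper bound on $\phi_F$, and the lower bound comes from expanding $t(F,p+U)-p^{e(F)}$ over subgraphs of $F$, applying the generalized H\"older inequality~\eqref{eq-gen-holder} with $d=\Delta$, and combining $\EE[U^\Delta]\leq\EE[U^2]$ (valid since $\Delta\geq 2$ and $U\leq 1$) with Corollary~\ref{cor:I_p-sq-lower-bd-2}. Your phrasing by contradiction is simply the contrapositive of the paper's direct pigeonhole on the subgraph expansion, and you are slightly more explicit about the reduction to the graphon problem via the analogue of Lemma~\ref{lem:var-relate}, but the argument is the same.
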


Indeed, assume $\Delta \geq 2$ (the case $\Delta=1$ is trivial).  For the upper bound on $\phi_F$ in~\eqref{eq-phiF-order}, take a copy of $K_{r,n-r}$ for $r=\delta n p^{\Delta}$ (as in~\S\ref{sec:cont-var}). For the lower bound, let $W$ be such that $t(F,W)\geq (1+\delta)p^{e(F)}$ and write $U=W-p$ (so $0 \leq U \leq 1-p$).
As in~\eqref{eq:K3-split}, we decompose $t(F,W)-p^{e(F)}$ into $\sum_{H \subseteq F} \theta_{F,H} \, p^{e(F)-e(H)} \, t(H,U)$
for some positive constants $\{\theta_{F,H}\}$,
and by the assumption on $t(F,W)$ there must exist some $H\subseteq F$ with $t(H,U) \gtrsim p^{e(H)}$.
However, by~\eqref{eq-gen-holder},
$
 t(H,U) \leq \EE [U^{\Delta}]^{e(H)/\Delta}
$,
which is at most $\EE[U^2]^{e(H)/\Delta}$ as $\Delta \geq 2$. Combining these,
$ \EE[U^2] \gtrsim p^{\Delta}$,
and yet (by Corollary~\ref{cor:I_p-sq-lower-bd-2}, as before)
$\EE[U^2] \lesssim \EE[I_p(p+U)]/I_p(1)$,
as claimed.

\section{Weak regularity}\label{sec:weak-ldp}

In this section, we give a short proof establishing
\eqref{eq-cd} and Corollary~\ref{cor:upper-tail} for slowly
decreasing $p$, namely $(\log n)^{-1/6} \ll p
\ll 1$, without requiring the new results of Chatterjee and
Dembo.
The lower bound on the tail probability is explained
in the paragraph immediately following
Corollary~\ref{cor:upper-tail}. The upper bound is
established through the following proposition.

\begin{proposition}
  \label{prop:weak-reg-upp-bd}
  Let $0 < \eta <\delta$ and $0 < p < 1$. Then
\begin{equation}\label{eq:union-upper-bound}
  \P(t(\cG_{n,p}) \geq (1+\delta) p^3)
  \leq R \exp\paren{-
    \phi(n,p,\delta-\eta)}\,,
\end{equation}
with $R = M^n \e^{-M^2}$ where $\e = \eta p^3/6 < 1$ and $M = 4^{1/\e^2}$.
\end{proposition}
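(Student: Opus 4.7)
The strategy is to use the Frieze--Kannan weak regularity lemma to approximate $\cG_{n,p}$ by a discrete structure of bounded complexity, then take a union bound in which each summand is controlled by the variational quantity $\phi(n,p,\delta-\eta)$.

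First, apply the Frieze--Kannan weak regularity lemma: every graph on $[n]$ admits a partition $\cP=(V_1,\ldots,V_M)$ into $M=4^{1/\e^2}$ classes whose block-averaged stepped graph $\cG_\cP$ satisfies $\snorm{\cG - \cG_\cP}_\square \leq \e$. The counting lemma in cut norm gives $\abs{t(\cG) - t(\cG_\cP)} \leq 3\e$. Round each block density down to the nearest multiple of $\e$, producing a density matrix $H=(h_{ij})$ and the corresponding $n\times n$ stepped weighted graph $\hat H \in \sG_n$; this rounding costs another $3\e$ in triangle density at most. Since $\e = \eta p^3/6$, on the event $\{t(\cG_{n,p}) \geq (1+\delta)p^3\}$ there must exist a pair $(\cP,H)$ with $t(\hat H) \geq (1+\delta)p^3 - 6\e = (1+\delta-\eta)p^3$.

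Next, for each fixed $(\cP,H)$ let $E_{\cP,H}$ be the event that the edge density of $\cG_{n,p}$ between $V_i$ and $V_j$ lies in $[h_{ij}, h_{ij}+\e)$ for every pair. Independence of edges across blocks and the standard Chernoff upper-tail bound yield $\P(d_{ij}(\cG_{n,p}) \geq h_{ij}) \leq \exp(-\abs{V_i}\abs{V_j}\,I_p(h_{ij}))$ whenever $h_{ij} \geq p$, so
\[
\P(E_{\cP,H}) \leq \exp\bigg(-\sum_{(i,j):\,h_{ij}\geq p} \abs{V_i}\abs{V_j}\,I_p(h_{ij})\bigg).
\]
To identify this exponent with $\phi$, modify $\hat H$ by raising every block entry below $p$ up to $p$, producing $\hat H^{+}$. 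Since $t$ is monotone and $I_p$ is minimized at $p$, we have $t(\hat H^{+}) \geq t(\hat H) \geq (1+\delta-\eta)p^3$ while $I_p(\hat H^{+}) = \sum_{h_{ij}\geq p}\abs{V_i}\abs{V_j}\,I_p(h_{ij})$. The definition of $\phi$ then gives $\P(E_{\cP,H}) \leq \exp(-\phi(n,p,\delta-\eta))$.

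Finally, union-bound over at most $M^n$ ordered partitions of $[n]$ into $M$ labeled parts and at most $\e^{-M^2}$ choices of the discretized density matrix $H$ (at most $1/\e$ grid values on each of at most $M^2$ entries), producing the claimed bound $M^n \e^{-M^2} \exp(-\phi(n,p,\delta-\eta))$. The main obstacle lies in the handling of low-density blocks: naively, Chernoff yields nothing for blocks with $h_{ij}<p$, and an $I_p(0) \asymp p$ contribution from each such block would swamp the target exponent $\phi \asymp n^2 p^2 \log(1/p)$. The replacement-by-$p$ step sidesteps this by ensuring that we never pay entropy for low-density blocks, so that the variational lower bound applies directly to the modified graph $\hat H^{+}$ and the Chernoff-produced exponent exactly matches $I_p(\hat H^{+})$.
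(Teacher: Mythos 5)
Your proof is correct and follows essentially the same route as the paper: Frieze--Kannan weak regularity plus the cut-norm counting lemma, rounding block densities down to a $6\e$-grid, the Chernoff bound applied block-by-block with the key modification of lifting sub-$p$ densities up to $p$ (which is exactly the paper's $I_p^>(x)=I_p(\max\{x,p\})$ device in Lemma~\ref{lem:single-bound}), and a union bound over partitions and rounded density matrices. The only minor imprecision is that for diagonal blocks $V_i=V_j$ the relevant number of potential edges is $\binom{|V_i|}{2}$ rather than $|V_i|^2$, a bookkeeping point the paper handles explicitly via its definition of $d_G(A,B)$ on ordered pairs; with that fix your identification $I_p(\hat H^+)\geq\phi(n,p,\delta-\eta)$ is exactly the paper's argument.
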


Assume $\delta > 0$ is fixed and $(\log n)^{-1/6} \ll p \ll
1$. Take a slowly decreasing $\eta = \eta_n$ so that $p^{-3}
(\log n)^{-1/2} \ll \eta \ll 1$. Then $\e = \eta p^3/6 \gg
(\log n)^{-1/2}$, and so, $M = 4^{o(\log n)} = n^{o(1)}$. Thus,
\begin{align*}
\log R = n \log M + M^2 \log (1/\e)
&\ll n \log n +
n^{o(1)}\log\log n \\
&\ll n^2p^2\log(1/p) \asymp
\phi(n,p,\delta) \sim \phi(n,p,\delta - o(1))\,.
\end{align*}
It then follows by Proposition~\ref{prop:weak-reg-upp-bd} that
\[
\P(t(\cG_{n,p}) \geq (1+\delta) p^3) \leq \exp\paren{-(1 -
  o(1)) \phi(n,p,\delta)}\,,
\]
which implies the upper bound in \eqref{eq-cd}. More
generally, one needs $p \gg (\log n)^{-1/(2e(H))}$ in order
to use this method for upper tails of $H$-counts, where $e(H)$ is the number of edges in $H$.

We proceed to prove Proposition~\ref{prop:weak-reg-upp-bd}. Define the relative edge-density between two nonempty subsets of vertices
$A,B\subseteq V(G)$ 
as $d_G(A,B) := |\{(a,b)\in A\times B: ab\in E(G)\}|/(\abs{A}\abs{B})$.

\begin{lemma} \label{lem:single-bound}
  Let $A_1, \dots, A_m$ be a partition of $V = \{1, \dots,
  n\}$ into nonempty sets. Let $\delta>0$, let $0 < p < 1$ and take $0 \leq
  d_{ij} \leq 1$ and $d_{ij} = d_{ji}$ for each $1 \leq
  i, j \leq m$. Suppose that
  \[
  \frac{1}{n^3}\sum_{i,j,k}
  \abs{A_i}\abs{A_j}\abs{A_k}d_{ij}d_{ik}d_{jk} \geq (1 + \delta)p^3\,.
  \]
  Then for a random graph $G \sim \cG_{n,p}$ on
  the vertex set $V$ we have
  \[
      \P (d_G(A_i,A_j)
    \geq
    d_{ij} \text{ for all } 1 \leq i \leq j \leq m)
    \leq \exp\paren{-\phi(n,p,\delta)}\,.
  \]
\end{lemma}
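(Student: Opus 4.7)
The plan is to build a weighted graph $G^* \in \sG_n$ adapted to the partition $\{A_i\}$ that is admissible in the variational problem defining $\phi(n,p,\delta)$, and then estimate the probability blockwise via Chernoff so that the exponent matches $I_p(G^*)$ exactly.

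Set $d^*_{ij}:=\max\{d_{ij},p\}$ and define $G^* \in \sG_n$ by $g^*_{xy}:=d^*_{ij}$ whenever $x\in A_i$, $y\in A_j$ with $x\neq y$, and $g^*_{xx}:=0$. Then $I_p(G^*) = \sum_{i\leq j} N_{ij}\,I_p(d^*_{ij})$, where $N_{ij}:=\abs{A_i}\abs{A_j}$ for $i<j$ and $N_{ii}:=\binom{\abs{A_i}}{2}$. For admissibility, note that $n^3 t(G^*) = \sum_{(x,y,z)\text{ distinct}} d^*(x,y)\,d^*(y,z)\,d^*(x,z)$; using $d^*_{ij}\geq d_{ij}$ and discarding only the $O(n^2)$ diagonal contribution from triples with a repeated vertex, this is at least $\sum_{i,j,k}\abs{A_i}\abs{A_j}\abs{A_k}\,d_{ij}d_{ik}d_{jk} - O(n^2)$, which by hypothesis is $\geq (1+\delta)p^3 n^3 - O(n^2)$. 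Hence $t(G^*)\geq(1+\delta)p^3$ to leading order, and by definition of $\phi$ we conclude $I_p(G^*)\geq\phi(n,p,\delta)$.

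For the probability side, the $N_{ij}$ potential edges within $A_i$ (resp.\ between $A_i$ and $A_j$) are i.i.d.\ Bernoulli$(p)$, and edge indicators for distinct block-pairs are jointly independent. The event $\{d_G(A_i,A_j)\geq d_{ij}\}$ is thus a binomial upper-tail event. When $d_{ij}\geq p$, the Chernoff bound gives $\P(\Bin(N_{ij},p)\geq d_{ij}N_{ij}) \leq \exp(-N_{ij} I_p(d_{ij})) = \exp(-N_{ij} I_p(d^*_{ij}))$; when $d_{ij}<p$ we have $d^*_{ij}=p$ and $I_p(p)=0$, so the trivial bound $1$ applies. Multiplying over all $i\leq j$ by independence,
\[
\P\bigl(d_G(A_i,A_j)\geq d_{ij}\ \forall\, i\leq j\bigr) \leq \prod_{i\leq j}\exp\bigl(-N_{ij}\,I_p(d^*_{ij})\bigr) = \exp(-I_p(G^*)) \leq \exp(-\phi(n,p,\delta)).
\]

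The only delicate step is the admissibility verification: the graphon-style sum in the hypothesis includes coinciding block-index triples, while the discrete $t(G^*)$ effectively restricts to pairwise distinct vertex triples, producing an $O(1/n)$ deficit. In the intended applications (notably Proposition~\ref{prop:weak-reg-upp-bd}, which builds in an explicit $\eta$-slack) this is negligible; alternatively, the deficit can be absorbed by raising a single edge-weight of $G^*$ to $1$, which boosts $t(G^*)$ by $\Omega(1/n^2)$ per such edge at a cost of only $O(\log(1/p))$ each in $I_p$, easily dwarfed by $\phi(n,p,\delta)\gtrsim n^2 p^2\log(1/p)$ in the regimes of interest.
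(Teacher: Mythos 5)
Your argument is correct and follows the paper's own route: lift the block-density matrix (capped below at $p$) to an admissible weighted graph $G^*$, apply the binomial Chernoff bound blockwise, and multiply over block-pairs by independence so that the exponent telescopes to exactly $I_p(G^*)\geq\phi(n,p,\delta)$ --- the paper's $I_p^>(A,d)$ is precisely your $I_p(G^*)$. You are also right to flag the $O(1/n)$ admissibility deficit coming from triples with a repeated vertex (which vanish in $t(G^*)$ since $g^*_{xx}=0$), a point the paper's proof passes over silently when asserting $t(G')\geq(1+\delta)p^3$; one small correction to your proposed patch, though: raising a single edge-weight to $1$ boosts $t(G^*)$ by only $\Theta(p^2/n^2)$, not $\Theta(1/n^2)$, since the other two edges of a typical triangle through it carry weight $\approx p$, so the cleaner resolution is the one you give first, namely that the deficit is of order $1/(np^3)$ relative to $\delta$ and is absorbed by the $\eta$-slack built into Proposition~\ref{prop:weak-reg-upp-bd}.
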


\begin{proof}
  Define $I_p^>(x) := I_p(\max\{x,p\}).$ We know that a
  binomial random variable $X \sim \Bin(N,p)$
  satisfies $\P(X \geq \delta N) \leq \exp(- N
  I_p^>(\delta))$.  We have
\begin{align*}
  \P\left(d_G(A_i,A_j\right) \geq
  d_{ij}) &\leq
  \exp\paren{-\abs{A_i}\abs{A_j} I_p^>(d_{ij})}
  \quad \text{if }
  i \neq j\,,
\\
  \P\left(d_G(A_i,A_i) \geq d_{ii}\right) &\leq
  \exp\paren{-\tbinom{\abs{A_i}}{2} I_p^>(d_{ii})}\,.
\end{align*}
  Let
  \[
  I_p^>(A,d) := \sum_{1 \leq i< j\leq m}
      \abs{A_i}\abs{A_j} I_p^>(d_{ij}) + \sum_{i=1}^m
      \tbinom{\abs{A_i}}{2} I_p^>(d_{ii})\,.
  \]
  Since $A_1, \dots, A_m$ are disjoint, we have
  \[
    \P\left(d_G(A_i,A_j)
    \geq
    d_{ij} \text{ for all } 1 \leq i \leq j \leq m\right)
    \leq
     \exp\paren{-I_p^>(A,d)}
    \leq \exp\paren{-\phi(n,p,\delta)}\,,
  \]
  where the last step follows from the following
  observation: if $G' \in \cG_n$ is the weighted graph on
  vertex set $V$ obtained by setting $g_{xy} =
  \max\{d_{ij},p\}$ whenever $x \in A_i$ and $y \in A_j$, then $t(G') \geq (1+\delta)p^3n^3$, so
  that $I_p^>(A,d) = I_p(G') \geq \phi(n,p,\delta)$ by our
  definition \eqref{eq:var} of $\phi$.
\end{proof}

The following lemma is a consequence of the
Frieze--Kannan weak regularity lemma and an associated
counting lemma (see \cite[\S9.1, \S10.5]{Lov12}).

\begin{lemma}
  \label{lem:FK}
  Let $\e > 0$ and let $G$ be a graph with $n$
  vertices. Then there exists a partition $\cP$ of the
  vertices of $G$ into at most $4^{1/\e^2}$ parts $A_1,
  \dots, A_m$ so that if $d_{ij} = d_G(A_i, A_j)$, then
  \[
  \bigg|t(G) - n^{-3}\sum_{i,j,k=1}^m
    \abs{A_i}\abs{A_j}\abs{A_k}d_{ij}d_{ik}d_{jk}\bigg| \leq 3 \e
    \,.
  \]
\end{lemma}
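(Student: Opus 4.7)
\textbf{Proof plan for Lemma~\ref{lem:FK}.} The plan is to deduce the statement by combining two off-the-shelf ingredients from graph limit theory in the standard way, as indicated by the reference to Lov\'asz~\cite{Lov12}. First I would associate to $G$ its empirical graphon $W^G\in\cW$: partition $[0,1]$ into $n$ equal intervals $I_1,\ldots,I_n$ and set $W^G(x,y)=1$ if $ij\in E(G)$ (and $x\in I_i$, $y\in I_j$) and $W^G(x,y)=0$ otherwise. With this convention $t(G)=t(W^G)$, where $t(W)=\int W(x,y)W(x,z)W(y,z)\,dxdydz$ is the triangle density already used throughout \S\ref{sec:cont-var}.

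Next I would invoke the Frieze--Kannan weak regularity lemma in its graphon formulation: for every $\e>0$ and every $W\in\cW$ there is a partition $\cP=\{A_1,\ldots,A_m\}$ of $[0,1]$ (pulled back to a vertex partition of $G$ in the case $W=W^G$) with $m\leq 4^{1/\e^2}$ parts such that the block-constant approximation $W_{\cP}$, taking the value $d_{ij}:=d_G(A_i,A_j)$ on $A_i\times A_j$, satisfies $\|W^G-W_{\cP}\|_{\square}\leq \e$. Here $\|\cdot\|_{\square}$ is the cut norm. This is the content of the statement that gives at most $4^{1/\e^2}$ parts (see, e.g., the version in~\cite[\S9.1]{Lov12}).

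The last step is the counting lemma for triangles: for any two graphons $U,V\in\cW$,
\begin{equation*}
|t(U)-t(V)|\leq 3\,\|U-V\|_{\square}.
\end{equation*}
One proves this by writing the difference as a telescoping sum,
\begin{equation*}
t(U)-t(V)=\bigl[t(U,U,U)-t(V,U,U)\bigr]+\bigl[t(V,U,U)-t(V,V,U)\bigr]+\bigl[t(V,V,U)-t(V,V,V)\bigr],
\end{equation*}
where $t(U_1,U_2,U_3)=\int U_1(x,y)U_2(x,z)U_3(y,z)\,dxdydz$, and bounding each of the three differences by $\|U-V\|_{\square}$ using that $U,V$ are $[0,1]$-valued and the definition of the cut norm (after integrating out the ``free'' coordinates, each bracket is an integral of $(U-V)$ against a $[0,1]$-valued kernel, which is bounded by the cut norm). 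Applying this with $U=W^G$ and $V=W_{\cP}$ gives $|t(W^G)-t(W_{\cP})|\leq 3\e$, and a direct computation shows
\begin{equation*}
t(W_{\cP})=n^{-3}\sum_{i,j,k=1}^{m}|A_i||A_j||A_k|\,d_{ij}d_{ik}d_{jk},
\end{equation*}
which yields the stated inequality.

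The only real ``obstacle'' is bookkeeping: one must use a form of Frieze--Kannan that delivers the specific bound $4^{1/\e^2}$ on the number of parts (rather than a generic $2^{O(1/\e^2)}$) and a form of the triangle counting lemma with constant $3$ rather than $6$. Both are standard and are the versions recorded in~\cite[\S9.1, \S10.5]{Lov12}, so no new idea is needed beyond citing these two results and combining them as above.
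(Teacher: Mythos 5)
Your proposal is correct and matches the paper's intended derivation exactly: the paper supplies no proof, stating only that the lemma follows from the Frieze--Kannan weak regularity lemma together with the associated counting lemma as recorded in Lov\'asz~\cite[\S9.1, \S10.5]{Lov12}. Your write-up simply unpacks that citation — pass to the empirical graphon $W^G$, apply weak regularity to obtain a partition into at most $4^{1/\e^2}$ parts with $\snorm{W^G - W_{\cP}}_{\square}\leq\e$, then use the triangle counting lemma with constant $3$ — which is precisely what the authors have in mind.
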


\begin{proof}[\textbf{\emph{Proof of Proposition~\ref{prop:weak-reg-upp-bd}}}]
  Let  $G$ be any graph on $n$ vertices satisfying $t(G)
  \geq (1+\delta) p^3$. By Lemma~\ref{lem:FK}, there
  exists a partition of its vertices into $m \leq
  M$ parts $A_1, A_2, \dots, A_m$, so that
  \[
  n^{-3}\sum_{i,j,k=1}^m
     \abs{A_i}\abs{A_j}\abs{A_k}d_{ij}d_{ik}d_{jk}
     \geq (1+\delta) p^3 - 3\e\,,
  \]
  where $d_{ij} = d_G(A_i,A_j)$.
  Let $d'_{ij}$ be $d_{ij}$ rounded down to the nearest multiple of
  $\e$. Then
  \begin{equation*}
  n^{-3}\sum_{i,j,k=1}^m
     \abs{A_i}\abs{A_j}\abs{A_k}d'_{ij}d'_{ik}d'_{jk}
     \geq (1+\delta) p^3 - 6\e = (1 + \delta - \eta) p^3\,.
  \end{equation*}
  For any fixed choice of $\{A_i\}_i$,
  $\{d'_{ij}\}_{i,j}$, by Lemma~\ref{lem:single-bound} we have
  \[
  \P (d_G(A_i,A_j)
    \geq
    d'_{ij} \text{ for all } 1 \leq i \leq j \leq m)
    \leq \exp\paren{-\phi(n,p,\delta-\eta)}\,.
    \]
  A union bound over
  the $A_i$'s ($\leq M^n$
  choices) and $d'_{ij}$'s ($\leq \e^{-M^2}$ choices)
  now yields~\eqref{eq:union-upper-bound}.
\end{proof}


\begin{thebibliography}{10}

\bibitem{BGLZ}
B.~B. Bhattacharya, S.~Ganguly, E.~Lubetzky, and Y.~Zhao.
\newblock Upper tails and independence polynomials in random graphs.
\newblock Preprint, available at \texttt{arXiv:1507.04074}.

\bibitem{Bol}
B.~Bollob{\'a}s.
\newblock {\em Random graphs}, volume~73 of {\em Cambridge Studies in Advanced
  Mathematics}.
\newblock Cambridge University Press, Cambridge, second edition, 2001.

\bibitem{BCLSV08}
C.~Borgs, J.~T. Chayes, L.~Lov{\'a}sz, V.~T. S{\'o}s, and K.~Vesztergombi.
\newblock Convergent sequences of dense graphs. {I}. {S}ubgraph frequencies,
  metric properties and testing.
\newblock {\em Adv. Math.}, 219(6):1801--1851, 2008.

\bibitem{Cha12}
S.~Chatterjee.
\newblock The missing log in large deviations for triangle counts.
\newblock {\em Random Structures Algorithms}, 40(4):437--451, 2012.

\bibitem{CD}
S.~Chatterjee and A.~Dembo.
\newblock Nonlinear large deviations.
\newblock Preprint, available at \texttt{arXiv:1401.3495}.

\bibitem{CV11}
S.~Chatterjee and S.~R.~S. Varadhan.
\newblock The large deviation principle for the {E}rd{\H o}s-{R}{\'e}nyi random
  graph.
\newblock {\em European J. Combin.}, 32(7):1000--1017, 2011.

\bibitem{DK12b}
B.~Demarco and J.~Kahn.
\newblock Tight upper tail bounds for cliques.
\newblock {\em Random Structures Algorithms}, 41(4):469--487, 2012.

\bibitem{DK12}
B.~DeMarco and J.~Kahn.
\newblock Upper tails for triangles.
\newblock {\em Random Structures Algorithms}, 40(4):452--459, 2012.

\bibitem{Fin92}
H.~Finner.
\newblock A generalization of {H}\"older's inequality and some probability
  inequalities.
\newblock {\em Ann. Probab.}, 20(4):1893--1901, 1992.

\bibitem{FK99}
A.~Frieze and R.~Kannan.
\newblock Quick approximation to matrices and applications.
\newblock {\em Combinatorica}, 19(2):175--220, 1999.

\bibitem{JLR}
S.~Janson, T.~{\L}uczak, and A.~Rucinski.
\newblock {\em Random graphs}.
\newblock Wiley-Interscience Series in Discrete Mathematics and Optimization.
  Wiley-Interscience, New York, 2000.

\bibitem{JOR04}
S.~Janson, K.~Oleszkiewicz, and A.~Ruci{\'n}ski.
\newblock Upper tails for subgraph counts in random graphs.
\newblock {\em Israel J. Math.}, 142:61--92, 2004.

\bibitem{JR02}
S.~Janson and A.~Ruci{\'n}ski.
\newblock The infamous upper tail.
\newblock {\em Random Structures Algorithms}, 20(3):317--342, 2002.
\newblock Probabilistic methods in combinatorial optimization.

\bibitem{JR04}
S.~Janson and A.~Ruci{\'n}ski.
\newblock The deletion method for upper tail estimates.
\newblock {\em Combinatorica}, 24(4):615--640, 2004.

\bibitem{KV04}
J.~H. Kim and V.~H. Vu.
\newblock Divide and conquer martingales and the number of triangles in a
  random graph.
\newblock {\em Random Structures Algorithms}, 24(2):166--174, 2004.

\bibitem{Lov12}
L.~Lov{{\'a}}sz.
\newblock {\em Large networks and graph limits}, volume~60 of {\em American
  Mathematical Society Colloquium Publications}.
\newblock American Mathematical Society, Providence, RI, 2012.

\bibitem{LS06}
L.~Lov{\'a}sz and B.~Szegedy.
\newblock Limits of dense graph sequences.
\newblock {\em J. Combin. Theory Ser. B}, 96(6):933--957, 2006.

\bibitem{LS07}
L.~Lov{\'a}sz and B.~Szegedy.
\newblock Szemer\'edi's lemma for the analyst.
\newblock {\em Geom. Funct. Anal.}, 17(1):252--270, 2007.

\bibitem{LZ}
E.~Lubetzky and Y.~Zhao.
\newblock On replica symmetry of large deviations in random graphs.
\newblock {\em Random Structures Algorithms}, 47:109---146, 2015.

\bibitem{Sze78}
E.~Szemer{\'e}di.
\newblock Regular partitions of graphs.
\newblock In {\em Probl\`emes combinatoires et th\'eorie des graphes ({C}olloq.
  {I}nternat. {CNRS}, {U}niv. {O}rsay, {O}rsay, 1976)}, volume 260 of {\em
  Colloq. Internat. CNRS}, pages 399--401. CNRS, Paris, 1978.

\bibitem{Vu01}
V.~H. Vu.
\newblock A large deviation result on the number of small subgraphs of a random
  graph.
\newblock {\em Combin. Probab. Comput.}, 10(1):79--94, 2001.

\end{thebibliography}

\end{document}